\newcommand{\ds}{\displaystyle}
\newcommand{\nn}{\nonumber}
\newcommand{\cA}{{\mathcal{A}}}
\newcommand{\cE}{{\mathcal{E}}}
\newcommand{\bu}{\mathbf u}
\newcommand{\bv}{\mathbf v}
\newcommand{\bw}{\mathbf w}
\newcommand{\bF}{\mathbf F}
\newcommand{\bV}{\mathbf V}
\newcommand{\by}{\mathbf y}
\newcommand{\byt}{\widetilde{\mathbf y}}
\renewcommand{\vec}[1]{\mathbf{#1}}
\newcommand{\btau}{\boldsymbol{\tau}}
\newcommand{\bxi}{\boldsymbol{\xi}}
\newcommand{\bzeta}{\boldsymbol{\zeta}}
\newcommand\restri[1]{\left.{#1}\right|_{\Gamma_I}}
\newcommand\weakto\rightharpoonup
\theoremstyle{plain}
\newtheorem{theorem}{Theorem}[section]
\newtheorem{lemma}[theorem]{Lemma}
\newtheorem{proposition}[theorem]{Proposition}
\newtheorem{corollary}[theorem]{Corollary}
\newtheorem{definition}{Definition}
\theoremstyle{remark}
\newtheorem{remark}{Remark}[section]
\numberwithin{equation}{section} \numberwithin{theorem}{section}
\numberwithin{remark}{section} \linespread{1}
\begin{document}

\title{Uniqueness of Weak Solutions for Biot-Stokes Interactions}
 \author{{\small \begin{tabular}[t]{c@{\extracolsep{.8em}}c}
     George Avalos &     {  Justin T. Webster} \\
\it Univ. Nebraska-Lincoln  \hskip.2cm   & \hskip.4cm \it Univ. of Maryland, Baltimore County   \\
\it Lincoln, NE & \it Baltimore, MD\\
gavalos@math.unl.edu &    websterj@umbc.edu \\
\end{tabular}}}
\maketitle

\begin{abstract}
\noindent  We resolve the issue of uniqueness of weak solutions for linear, inertial fluid-poroelastic-structure coupled dynamics. The model comprises a 3D Biot poroelastic system coupled to a 3D incompressible Stokes flow via a 2D interface, where kinematic, stress-matching, and tangential-slip conditions are prescribed. Our previous work provided a construction of weak solutions, these satisfying an associated finite energy inequality. However, several well-established issues related to the dynamic coupling, hinder a direct approach to obtaining uniqueness and continuous dependence. In particular, low regularity of the hyperbolic (Lam\'e) component of the model precludes the use of the solution as a test function, which would yield the necessary a priori estimate. In considering degenerate and non-degenerate cases separately, we utilize two different approaches. In the former, energy estimates are obtained for arbitrary weak solutions through a systematic decoupling of the constituent dynamics, and well-posedness of weak solutions is inferred. In the latter case, an abstract semigroup approach is utilized to obtain uniqueness via a precise characterization of the adjoint of the dynamics operator. The results here can be adapted to other systems of poroelasticity, as well as to the general theory of weak solutions for hyperbolic-parabolic coupled systems. 
\vskip.25cm
\noindent Keywords: {fluid-poroelastic-structure interaction, poroelasticity, Biot model, filtration problem, Beavers-Joseph-Saffman, semigroup methods}
\vskip.25cm
\noindent
{\em 2020 AMS MSC}: 74F10, 76S05, 35M13, 76M30, 35D30 
\vskip.4cm
\noindent Acknowledgments: The first author was partially supported by NSF-DMS 1907823; the second author was partially supported by NSF DMS-2307538.
\end{abstract}

\section{Introduction}

In this work we provide a complementary uniqueness result to the results in \cite{oldpaper}, where finite-energy weak solutions were constructed for an inertial 3D Biot-Stokes interaction. In that previous work, a dynamics operator is proposed which generates a strongly continuous semigroup of contractions. The model of interest is the fully dynamic poroelastic Biot dynamics \cite{Sanchez-Palencia,show2000,indiana}, coupled across a  2D boundary interface with the incompressible Stokes equations, via the so called Beavers-Joseph-Saffman (slip-type) conditions \cite{showfiltration}. The method in \cite{oldpaper} uses a Green's mapping approach to eliminate the pressure, as in \cite{avalos,avalos*}. With a $C_0$-semigroup in hand, strong and mild solutions are obtained, including corresponding uniqueness and continuous dependence results for those solution notions. The existence of weak solutions was then achieved through a density argument for solutions emanating from the domain of the semigroup generator. This approach also permitted the construction of weak solutions in the degenerate case of incompressible Biot constituents through a secondary limiting argument. 

However, owing to the low regularity of the hyperbolic-like dynamics at the interface in the Biot-Stokes model, weak solutions are not themselves viable test functions. This is a classical issue, noted for the wave equation \cite{evans}. Moreover, the standard regularization procedures (e.g., \cite{temam}) are complicated by the nature of the coupling on a portion of the interface. This is a central issue  \cite{pata} for hyperbolic-parabolic coupled systems, of course including fluid-structure interactions. This issue for uniqueness of weak solutions for coupled fluid-poroelastic dynamics has been alluded to in several previous works \cite{multilayered,oldpaper,rectplate,sunny3}, and has remained heretofore an open question. For linear dynamics, uniqueness of weak solutions is essential for later tackling  physically-motivated models  that incorporate elastic and geometric nonlinearities; even in the case of uncoupled Biot's dynamics, uniqueness is a complex issue of recent interest \cite{bw,bmw,bgsw,multilayered,cao}.

In the work at hand, we fully resolve the uniqueness of weak solutions for this filtration problem. This requires two separate proof methodologies: one for the non-degenerate case of compressible Biot constituents (with an underlying semigroup), and another when the Biot storage coefficient vanishes (degenerate parabolic component). In particular, the time-regularity of the  fluid-content $\zeta$, given in terms of the Biot pressure $p$ and  elastic displacement $\bu$ as the sum $\zeta=[c_0p+\alpha \nabla \cdot \bu]$, plays a central role. When $c_0=0$, the dynamics no-longer admit a proper semigroup, as the evolution variable $p$ becomes degenerate (lacking a time derivative). Yet when $c_0=0$, we directly obtain information on the regularity of $\alpha\nabla \cdot \bu_t$ from the pressure equation, which helps in properly characterizing the hyperbolic regularity of the  displacement $\bu$ and velocity $\bu_t$ variables. When $c_0>0$, the dynamics are non-degenerate and semigroup methods are applicable, but the information on the fluid content---gleaned from conservation of mass---comes only through the aforementioned sum $\zeta$, which hinders the previous argument. 

\subsection{Approaches to Uniqueness}
In the non-degenerate case $c_0>0$, we will avoid issues of low regularity of weak solutions by adopting an approach inspired by J. Ball's characterization of semigroup weak solutions in \cite{ball}, valid for dynamics which admit a semigroup generator. This is done in part by finding the explicit representation of the adjoint of the generator. Subsequently, by showing that weak solutions for the Biot-Stokes dynamics in the physical, finite energy, sense also constitute weak solutions in the adjoint sense of Ball, we can appeal to the uniqueness provided \cite{ball} by the semigroup established in \cite{oldpaper}. This task of finding the adjoint of the Biot-Stokes semigroup generator, however, is  nontrivial; the dynamics generator in \cite{oldpaper} is quite complex, involving several constituent operators and Green's mappings used to eliminate the Stokes pressure. Characterizing the domain of this matrix of differential and nonlocal operators is a challenging endeavor, one which we relegate to the Appendix. Once the adjoint characterization is obtained,  we then work to  {\em verify that arbitrary weak solutions}, as defined in \cite{oldpaper} and adapted from \cite{multilayered}, are indeed {\em semigroup solutions}. Thus, uniqueness follows from the established Biot-Stokes semigroup via \cite{ball}. 

In the degenerate case $c_0=0$, we are motivated by the approach toward uniqueness in \cite{bmw}. In particular, we focus on the null-data equations (as the problem as linear) and look toward component-wise regularity, using a systematic decoupling approach. Using zero data, we utilize various selections of test functions to extract additional regularity, and then apply various interpolation-type results. In this way, we justify each component of the energy identity, piece-by-piece, and justify cancellation of low regularity terms appearing on the interface (as well as in the interior). One of the central challenges in this analysis is the inability to separate terms which formally ``cancel out" in the energy relation, owing to the nature of the coupling. In this argument, we critically use the result of Temam \cite{temam1} for hyperbolic-like systems, which boosts weak solutions having $L^{2}(0,T;V)\cap H^{1}(0,T;H)\cap H^2(0,T;V')$ regularity to the regularity class of semigroups, namely $C([0,T],V)\cap C^1(0,T;H)$. 

In both cases, once uniqueness is established, continuous dependence follows immediately, since the previously constructed weak solutions in \cite{oldpaper} obey an energy estimate. 

\subsection{Notation}

We will work in the $L^2(U)$ framework, where $U \subseteq \mathbb R^n$, for $n=2,3$ a given spatial domain. We denote standard $L^2(U)$ inner products by $(\cdot,\cdot)_U$.  Classical Sobolev spaces of the form $H^s(U)$ and $H^s_0(U)$ (along with their duals) will be defined in the standard way \cite{kesavan}, with the $H^s(U)$ norm denoted by $||\cdot||_{s,U}$, or just $||\cdot||_{s}$ when the context is clear. For a Banach space $Y$ we denote its (Banach) dual as $Y'$, and  the associated duality pairing as $\langle \cdot, \cdot\rangle_{Y'\times Y}$. The notation $\mathscr{L}(X,Y)$ denotes the (Banach) space of bounded linear operators from Banach spaces $X$ to $Y$, and we write $\mathscr{L}(Y)$ when $X=Y$. We denote $\vec{x} = (x_1,x_2,x_3) \in \mathbb{R}^3$, with associated spatial differentiation by $\partial_i$. For estimates, we will use the notation $A\lesssim B$ to mean there exists a constant $c$ for which $A \le c B$.

\subsection{Biot-Stokes Model and Spatial Domain} 
We consider the fluid-poroelastic-structure interaction (FPSI) model as presented in \cite{oldpaper}. The Biot displacement variable, $\bu$, is associated with a Biot pressure, $p$; both are defined on a 3D domain $\Omega_b$. We take the Biot domain to be fully-saturated, in the sense of \cite{coussy,show2000}, and we assume the porous matrix is isotropic and homogeneous. The function $\bF$ represents a volumetric force, and $S$ a diffusive source. Therewith, a poroelastic system on $\Omega_b$ is modeled by the following equations:
\begin{equation}\label{biot1}
\begin{cases}
    \rho_b \vec{u}_{tt} - \mu\Delta\vec{u} - (\lambda + \mu)\nabla(\nabla\cdot\vec{u}) + \alpha \nabla p = \vec{F}_b, &\text{ in } \Omega_b \times (0,T),\\
    [c_0p + \alpha \nabla \cdot \vec{u}]_t - \nabla \cdot [k\nabla p] = S, &\text{ in } \Omega_b \times (0,T).
\end{cases}
\end{equation}
 Both $\bu$ and $p$ are homogenized quantities in the poroelastic region \cite{coussy,Sanchez-Palencia}. We take  $\rho_b\ge 0$ to be the density corresponding to the poroelastic region; we consider here the {\em inertial case} when $\rho_b>0$. The parameters $\lambda,\mu >0$ are the standard Lam\'e coefficients of elasticity \cite{ciarletbook}. The coupling constant $\alpha>0$ is known as the {\em Biot-Willis} coupling constant \cite{biot2,coussy}. The storage coefficient $c_0\ge 0$ represents the net  compressibility of the fluid-solid matrix; when $c_0=0$ we say that {\em the system has incompressible constituents}. The quantity  \begin{equation}\label{biotfluid}
    \zeta = c_0p + \alpha \nabla \cdot\vec{u}
\end{equation}
is known as the {\em fluid content} of the Biot system. Finally, $k \in \mathbb R_+$ represents the scalar permeability of the porous matrix. 
  
We consider a free fluid velocity, $\bv$ with pressure, $p_f$, defined on a 3D domain $\Omega_f$:
\begin{equation}\label{stokes1}
    \rho_f \vec{v}_t - 2\nu \nabla \cdot \vec{D}(\vec{v}) + \nabla p_f = \vec{F}_f,\quad \quad \nabla\cdot\vec{v} = 0, \quad  \text{ in } \Omega_f \times (0,T).
\end{equation}
Here, $\rho_f$ denotes the density of the fluid at a reference pressure and $\nu$ denotes the shear viscosity of the fluid. 
We assume that the elastic  stress  $\sigma^E(\vec{u})$ obeys the linear strain-displacement law \cite{kesavan,show2000} given by
\begin{equation}\label{elasticstress}
    \sigma^E(\vec{u}) = 2\mu\vec{D}(\vec{u}) + \lambda(\nabla\cdot\vec{u})\vec{I},
\end{equation}
where $\vec{D}(\vec{u}) = \frac{1}{2}(\nabla\vec{u} + (\nabla\vec{u})^T)$ is the symmetrized gradient \cite{kesavan,temam}. 
We then denote the total poroelastic and fluid stress tensors as
\begin{align*}
    \sigma_b = \sigma_b(\vec{u},p) &= \sigma^E(\vec{u}) - \alpha p\vec{I}, \quad \quad \sigma_f = \sigma_f(\vec{v},p_f) = 2\nu\vec{D}(\vec{v}) - p_f\vec{I},
\end{align*}

{In this note, we  take $\rho_b,\rho_f,\nu,\alpha, \mu>0$, and $c_0, \lambda \ge 0$.}
In line with \cite{oldpaper,multilayered}, we focus on the spatial configuration of two stacked  boxes in 3D.  We will effectively identify the lateral sides (i.e., in the $x_1$ and $x_2$ directions), producing a flat, laterally toroidal domain. By considering lateral periodicity we may focus on the specific challenges associated to the interface dynamics; future work will address full, physical boundary conditions, including mixed boundary conditions with corners (as in \cite{avalos,avalos*}), without invoking lateral periodicity. 

\begin{center}
    \includegraphics[width=.7\textwidth]{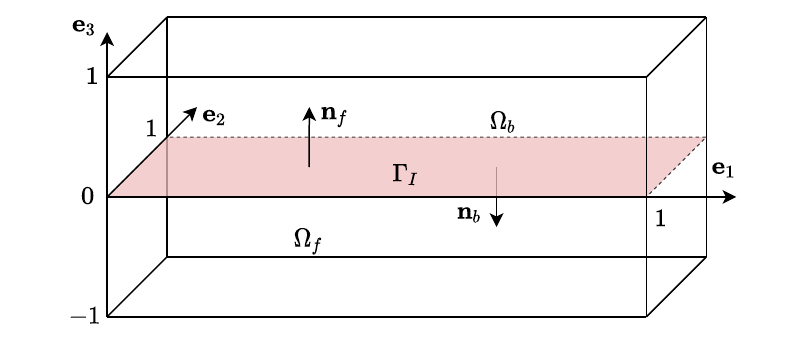}
\end{center}

\noindent Now---and for the remainder of the paper---we take $\Omega_b \equiv (0,1)^3$ to be a fully-saturated poroelastic structure. The dynamics on $\Omega_b$ are modeled by the Biot equations, described above. 
Let $\Omega_f \equiv  (0,1)\times (0,1) \times (0,-1)$ be a region adjacent to $\Omega_b$.
The two regions share an interface $\Gamma_I = \partial\Omega_b \cap \partial \Omega_f = (0,1)^2$. The normal vectors going out of the Biot and Stokes regions are denoted by $\vec{n}_b$ and $\vec{n}_f$, respectively. Then we orient the interface, so that on $\Gamma_I$, $\vec{n}_f = -\vec{n}_b = \vec{e}_3$. Denote the Biot and Stokes boundaries by
\begin{equation} \Gamma_b = \{\phi \in \partial \Omega_b~:~x_3=1\}~\text{ and }~ \Gamma_f = \{\phi \in \partial \Omega_f~:~x_3=-1\}.\end{equation} 
We consider, for these upper and lower boundary components, the homogeneous conditions
\begin{equation}\label{BC1}
    \vec{u} = \vec{0} ~~\text{ and }  ~~p = 0 \text{ on }\Gamma_b, \quad \text{ and } ~~\vec{v} = \vec{0} \text{ on } \Gamma_f, \quad \forall\, t \in (0,T).
\end{equation}
As previously mentioned, we consider periodic conditions on the lateral boundaries, collectively defined as $\Gamma_{lat}$, i.e., as the union of the four lateral faces in the $x_1$ and $x_2$ directions. In other words, we identify face quantities that oppose one another in $x_1$ or $x_2$ {\em whenever Sobolev traces are appropriately defined}.
At the interface $\Gamma$, we enforce stress-matching and velocity-matching type conditions, as well as a tangential slip condition. These are written in strong form as:
\begin{align}
    -k\nabla p\cdot\vec{e}_3 &= \vec{v} \cdot \vec{e}_3 - \vec{u}_t \cdot \vec{e}_3, \label{IC1}\\
    \beta(\vec{v} - \vec{u}_t)\cdot \btau &= -\btau \cdot \sigma_f \vec{e}_3,\label{IC2}\\
    \sigma_f\vec{e}_3 &= \sigma_b\vec{e}_3,\label{IC3}\\
    p &= -\vec{e}_3 \cdot \sigma_f\vec{e}_3 
    .\label{IC4}
\end{align}
We use the notation of  $\btau$ for the tangent vectors on $\Gamma_I$, i.e., as $\btau = \vec{e}_i$ for $i = 1,2$. The tangential slip condition is known as the {\it Beavers-Joseph-Saffman} condition \cite{mikelicBJS}, and the parameter $\beta>0$ is called the {\it slip length} (see also \cite{multilayered, yotovBJS}). 

In line with \cite{oldpaper}, we utilize natural energies which arise by: formally testing the above Biot-Stokes system \eqref{biot1} with $(\vec{u}_t, p)$ and \eqref{stokes1} with $\vec{v}$, assuming sufficient regularity of the solution to justify integration by parts, and invoking boundary conditions in \eqref{IC1}--\eqref{IC3} at the interface $x_3=0$. Defining the total trajectory energy $e(t)$ as \begin{equation} \label{energy} e(t) \equiv  \frac{1}{2}\Big[\rho_b\|\vec{u}_t\|_{0,b}^2 + \|\vec{u}\|_E^2 + c_0\|p\|_{0,b}^2 + \rho_f\|\vec{v}\|_{0,f}^2\Big],\end{equation} and a dissipator $d_0^t$ as \begin{equation}\label{dissipator} d_0^t \equiv \int_0^t\big[k\|\nabla p\|_{0,b}^2 
+ 2\nu \|\vec{D}(\vec{v})\|_{0,f}^2 
+ \beta \|(\vec{v}-\vec{u}_t)\cdot\boldsymbol{\tau} \|_{\Gamma_I}^2 \big] d\tau,\end{equation}
we obtain the formal energy balance (taking all sources to be zero):
\begin{equation} \label{eident} e(t) +d_0^t = e(0).\end{equation}
Above, we have denoted the norm \begin{equation}\|\vec{u}\|^2_E \equiv  (\sigma^E(\vec{u}),\vec{D}(\vec{u}))_{\mathbf L^2(\Omega)},\end{equation} which is equivalent to the standard $\mathbf H^1(\Omega_b)$ norm by Korn's inequality and Poincar\'e's inequality \cite{kesavan,temam}.
This identity holds for smooth solutions \cite{oldpaper} (e.g., in the domain of the semigroup generator) and is extended to semigroup solutions, for finite energy data.  
With reference to (2.1)--(2.9) below, we also refer to the {\em energy inequality} as
\begin{align} \label{eidentt}
e(t) + d_0^t \le & ~ e(0) \\ \label{eidenttt}
e(t) + d_0^t \lesssim & ~ e(0) + \int_0^t\big[||\mathbf F_f(\tau)||^2_{\vec{L}^2(\Omega_b)}+||\mathbf F_b(\tau)||^2_{[\mathbf H^1_{\#,*}(\Omega_f)\cap \mathbf V]'}+||S(\tau)||_{[H^1_{\#,*}(\Omega_b)]'}^2 \big]d\tau
\end{align}
in the first case when $\mathbf F_f = \mathbf F_b = S \equiv 0$, and in the latter case when the sources are present---the function spaces being defined in detail in Section \ref{opssp}. 
The energy inequality holds for weak solutions through an approximation argument in \cite{oldpaper}. In this paper, we will show that such weak solutions are in fact unique, and hence the energy estimate  in \eqref{eidenttt} holds for the unique weak solution. Then, by linearity, superposition gives that the weak problem is well-posed, of course making the appropriate assumptions on the data.

\section{Relevant Definitions and Past Result}\label{mainresults}
The boundary value problem of interest, written in divergence form here, is:
\begin{align}\label{systemfull}
    \rho_b \vec{u}_{tt} - \nabla\cdot\sigma^E(\vec{u}) + \alpha \nabla p =&~ \vec{F}_b, &\text{ in } \Omega_b \times (0,T),\\ \label{systemfull0}
    [c_0p + \alpha \nabla \cdot \vec{u}]_t - \nabla \cdot [k\nabla p] =&~ S, &\text{ in } \Omega_b \times (0,T),\\
    \rho_f \vec{v}_t - 2\nu \nabla \cdot \vec{D}(\vec{u})  + \nabla p_f =&~ \vec{F}_f,\quad \quad \nabla\cdot\vec{v} = 0, &  \text{ in } \Omega_f \times (0,T).\label{systemfull1}
\end{align}
Boundary conditions at $x_3=\pm 1$ are given by:
\begin{equation}\label{uplow}
    \vec{u} = \vec{0} ~~\text{ and } ~~ p = 0~ \text{ on }~\Gamma_b, \quad ~\text{ and } ~~\vec{v} = \vec{0} ~\text{ on }~ \Gamma_f, ~~~t \in (0,T).\end{equation}
Interface conditions on $\Gamma_I\times (0,T)$ are:
\begin{align}
    -k\nabla p\cdot\vec{e}_3 &= \vec{v} \cdot \vec{e}_3 - \vec{u}_t \cdot \vec{e}_3, \label{IC1*}\\
    \beta(\vec{v} - \vec{u}_t)\cdot \btau &= -\btau \cdot \sigma_f \vec{e}_3,\label{IC2*}\\
    \sigma_f\vec{e}_3 &= \sigma_b\vec{e}_3,\label{IC3*}\\
    p &= -\vec{e}_3 \cdot \sigma_f\vec{e}_3. 
    \label{IC4*}
\end{align}
Periodic conditions are taken on the lateral faces $\Gamma_{lat}\times (0,T)$. Initial conditions are prescribed for the states
\begin{equation}\label{endfull}\vec{y}(0) = [\vec{u}(0),\vec{u}_t(0),p(0),\vec{v}(0)]^T,\end{equation}
in line with the energy $e(t)$ as defined in \eqref{energy}.

\subsection{Operators and Spaces}\label{opssp}

For either domain $\Omega_i$ ($i=b,f$), let us introduce the notation $H^s_\#(\Omega_i)$ for the space of all functions from $H^s(\Omega_i)$ that are 1-periodic in directions $x_1$ and $x_2$:
$$    {H}^s_\#(\Omega_i) := \{f \in {H}^s(\Omega_i)~:~ \gamma_j[f]|_{x_1 = 0} = \gamma_j[f]|_{x_1 = 1},~\gamma_j[f]|_{x_2 = 0} = \gamma_j[f]|_{x_2 = 1} \text{ for } j=0,1,...,s-1\},$$
where we have denoted by $\gamma_j$ the $j$th standard trace map \cite{kesavan}. We take the analogous definition for the vector-valued functions in $\mathbf H^s_\#(\Omega_i)$. 
Then let
\begin{align*}
    \vec{U} &\equiv \{\vec{u} \in \vec{H}_\#^1(\Omega_b)~:~~ \vec{u}\big|_{\Gamma_b} = \vec{0}\},\\
    \vec{V} &\equiv \{\vec{v} \in \vec{L}^2(\Omega_f) ~:~~ \text{div}~ \vec{v} \equiv 0 \text{ in }\Omega_f; \, ~~ \vec{v}\cdot\vec{n} \equiv 0\text{ on } \Gamma_f\},\\
    X &\equiv  \vec{U} \times \vec{L}^2(\Omega_b) \times L^2(\Omega_b) \times \vec{V},
\end{align*}
the latter $X$ being the finite energy space for the dynamics. 

For ease, we will also use the notation below $\vec{H}^1_{\#,\ast}(\Omega_f)$ and ${H}^1_{\#,\ast}(\Omega_b)$ where the subscript $*$ on denotes a zero Dirichlet trace on $\Gamma_f$ and $\Gamma_b$, resp.; that is, on the non-interface face. In this way we can identify $\mathbf U \equiv \mathbf{H}^1_{\#,\ast}(\Omega_b)$.
The space $H^1_{\#,*}(\Omega_b)$ is topologized via the Poincar\'e inequality \cite{kesavan,bgsw} with the gradient norm; thus $$||\cdot||_{H^1_{\#,*}(\Omega_b)} \equiv ||\nabla \cdot||_{L^2(\Omega_b)}.$$
The topology of $\mathbf U$ is induced by the norm $||\cdot||_E$ introduced before through  \begin{equation}\label{bilform} a_E(\cdot,\cdot)=(\sigma^E(\cdot),\mathbf D(\cdot))_{\mathbf L^2(\Omega_b)}.\end{equation} This norm is equivalent to the full $\mathbf H^1(\Omega_b)$ norm in $\mathbf U$  \cite{kesavan}. 
Moreover, for the following differential actions, we set: \begin{equation}\label{ops}\cE_0 = -\rho_b^{-1}\nabla\cdot\sigma^E~\text{ and }~ A_0 = -c_0^{-1}k\Delta.\end{equation}
When $c_0,\rho_b,\rho_f>0$, we introduce some equivalent topologies for the inner-product on the finite-energy space $X$:
\begin{align}\label{innerX}
(\mathbf y_1, \mathbf y_2)_X = a_E(\bu_1,\bu_2)+\rho_b(\bw_1,\bw_2)_{\mathbf L^2(\Omega_b)}+c_0(p_1,p_2)_{L^2(\Omega_b)}+\rho_f(\mathbf v_1,\mathbf v_2)_{\mathbf L^2(\Omega_f)}.
\end{align}
 We take $\vec{w} = \vec{u}_t$  to be the elastic state variable for velocity. 
We then consider a Cauchy problem which captures the dynamics of the Biot-Stokes system. Namely: find $\vec{y} = [\vec{u},\vec{w},p,\vec{v}]^T \in \mathcal{D}(\cA)$ (resp. $X$) such that
\begin{equation}\label{cauchy}
   \dot{ \vec{y}}(t) = \cA \vec{y}(t)+\mathcal F(t); \quad \vec{y}(0) = [\mathbf u_0,\mathbf u_1,p_0,\mathbf v_0]^T \in \mathcal{D}(\cA) \text{ (resp. } X),
\end{equation}
where $\mathcal F(t) = [\mathbf 0, \mathbf F_b(t), S(t), \mathbf F_f(t)]^T$ (of appropriate regularity).
The action of the  operator $\cA:\mathcal{D}(\cA)\subset X \to X$ is provided by
\begin{equation}\label{A*}
    \cA \equiv \begin{pmatrix}
        0 & \vec{I} & 0 & 0\\
        -\cE_0 & 0 & -\alpha\rho_b^{-1} \nabla & 0\\
        0 & -\alpha c_0^{-1} \nabla\cdot & -A_0 & 0\\
        0 & 0 & \rho_f^{-1}G_1 & \rho_f^{-1}[\nu\Delta + G_2 + G_3]
    \end{pmatrix}.
\end{equation}
The operators $G_1, G_2,$ and $G_3$ above are certain Green's maps which provide the elimination of the pressure (and carry boundary information) and will be defined properly in Section \ref{operator}.

We will need some dual trace spaces, associated to the spaces $H^s_{\#}(\Omega_i)$ for $i=b,f$. We first note that the image of the Dirichlet trace map $\gamma_0: H^1(\Omega_i) \to H^{1/2}(\Omega_i)$ is well-defined by the Sobloev Trace theorem, as are subsequent restrictions to any faces of the cubic geometry of $\Omega_b$ or $\Omega_f$ \cite{Grisvard}. To preserve and utilize the associated duality pairing with trace spaces such as $\gamma_0[H^{1}_{\#}(\Omega_i)]$---for instance to characterize higher order traces---we now explicitly define $H_{\#}^{-1/2}(\partial \Omega_f)$. The definition will then permit  such negative traces on any face via restriction (in the sense of distributions); an analogous definition will also hold for spaces defined on $\Omega_b$. For example, let $\Gamma_a \subset \Gamma_{\text{lat}}$ and $\Gamma_b \subset \Gamma_{\text{lat}}$ generically represent diametrically opposed faces of $\Omega_b$ or $\Omega_f$---namely $(a,b)=(\{x_1=0\}, \{x_1=1\})$ or $(\{x_2=0\}, \{x_2=1\})$. Then
\begin{align}\label{tracespaces}
H^{-1/2}_{\#}(\partial \Omega_f)=  \Big\{g \in H^{-1/2}(\partial \Omega_f)~:~g \big|_{\Gamma_a} = g\big|_{\Gamma_b} \in H^{-1/2}\big((0,1)\times (-1,0)\big)~\text{ for each pair}~\Gamma_a, \Gamma_b\Big\}.
\end{align} Likewise, we obtain the dual space $H^{-1/2}_{\#}(\partial \Omega_b)$. Mutatis mutandis, we obtain the spaces $H^{-r}_{\#}(\partial \Omega_i)$, for $r=1$ and $r=3/2$. 

\subsection{Definition of Weak Solutions and Discussion}
We will now define weak solutions for all $c_0\ge 0$ and we begin with some spaces:
\begin{align*}
    \mathcal{V}_b &= \{\vec{u} \in L^\infty(0,T;\vec{U}) ~:~ \vec{u} \in W^{1,\infty}(0,T;\vec{L}^2(\Omega_b))\}, \\
    \mathcal{Q}_b &= \left\{p \in L^2(0,T; H^1_{\#,*}(\Omega_b))~:~c_0^{1/2}p \in L^\infty(0,T;L^2(\Omega_b))\right\}, \\
    \mathcal{V}_f &= L^\infty(0,T;\vec{V}) \cap L^2(0,T; \vec{H}^1_{\#,\ast}(\Omega_f) \cap \vec{V}).
\end{align*}
The {weak solution space} is then
\newcommand\Vsol{\mathcal{V}_{\text{sol}}}
\newcommand\Vtest{\mathcal{V}_{\text{test}}}    
\[
    \mathcal{V}_\text{sol} = \mathcal{V}_b\times \mathcal{Q}_b\times\mathcal{V}_f,~\text{ for } [\bu,p,\bv]^T
\]
and the {test space} is
\[
    \mathcal{V}_\text{test} = C^1_0([0,T); \vec{U} \times  H^1_{\#,*}(\Omega_b) \times (\vec{H}^1_{\#,\ast}(\Omega_f)\cap \vec{V})),
\]
where we use the  notation $C^1_0$ for continuously differentiable functions with compact support on $(-\epsilon,T)$ for all $\epsilon > 0$. 
We introduce some further  notation. Let
 $((\cdot, \cdot))_{\mathscr O}$ denote an inner-product on $L^2(0,T;L^2(\mathscr O))$, for a spatial domain $\mathscr O$. Similarly we will denote $(\langle \cdot,\cdot \rangle)_{\mathscr O}$ below as the $L^2(0,T;[W(\mathscr O)]') \times L^2(0,T;W(\mathscr O))$ pairing, for the appropriate  pairing between the Banach space $W(\mathscr O)$ and its dual. 
We also suppress the explicit expression of trace operators below, noting instead boundary portion restrictions from subscripts. We will consider initial conditions of the form:
     \begin{equation}\label{weakICs}
        \vec{u}(0) = \vec{u}_0, ~~ \vec{u}_t(0) = \vec{u}_1, ~~ [c_0p + \alpha\nabla\cdot\vec{u}](0) = d_0, ~~ \vec{v}(0) = \vec{v}_0.
    \end{equation}
   Finally, we consider source data of regularity (at least): 
    $$\mathbf F_f \in L^2\big(0,T;(\vec{H}^1_{\#,\ast}(\Omega_f)\cap \vec{V})'\big),~~\mathbf F_b \in L^2(0,T;\vec{L}^2(\Omega_b)),~~\text{and}~~S \in L^2(0,T; (H^{1}_{\#,*}(\Omega_b))').$$
 
Then we may define weak solutions.
\begin{definition}\label{weaksols}
    We say that $[\vec{u},p,\vec{v}]^T \in \mathcal{V}_\text{sol}$ is a weak solution to \eqref{systemfull}--\eqref{endfull} if: \\[.2cm] (1) 
    For every test function $[\bxi,q_b,\bzeta]^T \in \mathcal{V}_\text{test}$ the following identity holds:
    \begin{align}\label{weakform}
        -&~ \rho_b((\vec{u}_t,\bxi_t))_{\Omega_b} + ((\sigma_b(\vec{u},p), \nabla \bxi))_{\Omega_b} - ((c_0 p + \alpha\nabla\cdot\vec{u}, \partial_t q_b))_{\Omega_b} + ((k\nabla p,\nabla q_b))_{\Omega_b} \nn\\
        &- \rho_f((\vec{v},\bzeta_t))_{\Omega_f} + 2\nu((\vec{D}(\vec{v}),\vec{D}(\bzeta)))_{\Omega_f} + (({p,(\bzeta-\bxi)\cdot\vec{e}_3}))_{\Gamma_I} - (({\vec{v}\cdot\vec{e}_3,q_b}))_{\Gamma_I} \nn\\
        &- (({\vec{u}\cdot\vec{e}_3,\partial_tq_b}))_{\Gamma_I} + \beta (({\vec{v}\cdot\btau,(\bzeta - \bxi)\cdot\btau}))_{\Gamma_I} + \beta(({\vec{u}\cdot\btau, (\bzeta_t - \bxi_t)\cdot\btau}))_{\Gamma_I} \nn\\
        =&~ \rho_b(\mathbf u_1,\bxi)_{\Omega_b}\big|_{t=0} + (d_0,q_b)_{\Omega_b}\big|_{t=0} + \rho_f(\vec{v}_0,\bzeta)_{\Omega_f}\big|_{t=0} - ({\vec{u}_0\cdot\vec{e}_3,q_b})_{\Gamma_I}\big|_{t=0} \nn\\
        &~+ \beta({\vec{u}_0\cdot\btau, (\bzeta - \bxi)\cdot\btau})_{\Gamma_I}\big|_{t=0} + ((\vec{F}_b,\bxi))_{\Omega_b} + (\langle S,q_b\rangle)_{\Omega_b} + (\langle \vec{F}_f,\bzeta\rangle)_{\Omega_f},
    \end{align}
 for $\btau=\mathbf e_i,~i=1,2$.
    \\[.2cm] (2) The function $\mathbf u \in \mathcal V_b$ has the additional property that $\gamma_0[\mathbf u]_t \cdot \btau \in L^2(0,T;L^2(\Gamma_I))$ for $\btau=\mathbf e_i,~i=1,2$.
\end{definition}
\begin{remark} In the motivating works \cite{oldpaper,multilayered} the analogous notions of weak solution did not include point (2) above. In those works, the constructed weak solution indeed possessed  property (2), but it was not included as part of the definition. As we shall see below, it will be necessary to include this in the definition for the uniqueness argument in the case $c_0=0$.
\end{remark}

In the above formulation, certain time differentiations are pushed to the time-dependent test function, as is a common treatment of hyperbolic problems. It is worth noting that the weak formulation circumvents a priori trace regularity issues at the interface associated to $\gamma_0[\bu_t]$ through this temporal integration by parts  (in the normal and tangential components of $\bu_t$ on $\Gamma_I$). We note that weak solutions have only that $\bu_t \in L^{\infty}(0,T;\mathbf L^2(\Omega_b))$ for the given interior regularity.  {\em Natural} Neumann-type conditions in \eqref{IC1*}--\eqref{IC4*}  are unproblematic, as such traces do not appear in the weak formulation and will be a posteriori justified as elements of $H^{-1/2}(\Gamma_I)$-type spaces. 
Yet, {\em for a given weak solution}, we can infer some regularity of temporal derivatives in the statement of \eqref{weakform}, {a posteriori}. For instance, the terms  $\bu_{tt}$ and $[c_0p+\nabla \cdot \bu]_t$ in the interior, as well as $\gamma_0[\bu]_t \cdot \mathbf e_3$, will exist as elements of certain dual spaces. However, characterizing a space of distributions for these time derivatives is not straightforward, owing to the coupled nature of the problem and its weak formulation in \eqref{weakform} above. The term $\gamma_0[(\bv-\bu_t)\cdot \btau]$ will be a properly defined element of $L^2(0,T;L^2(\Gamma_I))$ for any weak solution.

As a final comment, we mention that the classical reference \cite{ball} includes an abstract construction of weak solutions from any given semigroup, as well as uniqueness of said weak solutions; however, these weak solutions do not correspond to any energy estimates or a weak form predicated on symmetric bilinear forms. In the semigroup case here $c_0>0$, we will utilize the insights from \cite{ball} to obtain uniqueness; indeed, we show that weak solutions as defined here are in fact weak solutions in the sense of the semigroup, and are thus unique. When $c_0=0$, we do not have an underlying semigroup, and thus must manifest a different methodology. 

\subsection{Definition of the Semigroup Generator $\mathcal A$:~ $c_0>0$}\label{operator}
In \cite{oldpaper} it is shown that the operator $\mathcal A$ described above is a generator of a $C_0$-semigroup on $X$. Here we recall this operator precisely. This is necessary for Section \ref{ballproof}  below, as the aforesaid generator (and its adjoint) play the central role in the uniqueness argument for $c_0>0$. 

To define the dynamics operator we eliminate the Stokes pressure $p_f$ via elliptic theory, namely through appropriately defined Green's mappings \cite{redbook}. 
We consider a fluid-pressure sub-problem as in \cite{oldpaper}. 
We have the Neumann and Dirichlet Green's maps $N_f$ and $D_I$ by
\[
    \phi \equiv N_fg \iff 
    \begin{cases}
        \Delta \phi = 0 & \text{in} ~\Omega_f;\\
        \partial_{\vec{e}_3} \phi = g &  \text{on} ~ \Gamma_f;\\
        \phi = 0 &  \text{on} ~ \Gamma_I;
    \end{cases}
    \quad \quad
    \psi \equiv D_I h \iff
    \begin{cases}
        \Delta \psi = 0 &  \text{in} ~\Omega_f;\\
        \partial_{\vec{e}_3} \psi = 0 &  \text{on} ~ \Gamma_f;\\
        \psi = h &  \text{on} ~\Gamma_I;
    \end{cases}
\]
where we have suppressed the lateral periodicity implicit in the formulations on $\Gamma_{lat}$.
The maps $N$ and $D$ are well-defined via classical elliptic theory for all data classes \cite{redbook,lionsmag}:
\begin{align*}
    N_f &\in \mathcal{L}(H^s(\Gamma_f),  H^{s + \frac{3}{2}}(\Omega_f)),\quad D_I \in \mathcal{L}(H^s(\Gamma_I), H^{s + \frac{1}{2}}(\Omega_f)),\quad s \in \mathbb{R}.
\end{align*}

Then for given $p$ and $\vec{v}$ in appropriate spaces, we can write $p_f = \Pi_1p + \Pi_2\vec{v} + \Pi_3\vec{v}$, where
\begin{equation}\label{PiOps}
    \Pi_1p = D_I[(p)_{\Gamma_I}],\quad  \Pi_2\vec{v} = D_I\big[(\vec{e}_3 \cdot [2\nu\vec{D}(\vec{v})\vec{e}_3])_{\Gamma_I}\big], \quad \Pi_3\vec{v} = N_f[(\nu\Delta\vec{v} \cdot \vec{e}_3)_{\Gamma_f}].
\end{equation}
Finally, for $i=1,2,3$, we denote $G_i = -\nabla\Pi_i$, as defined in \eqref{A*}. With these $\Pi_i$ well-defined, we may proceed to define the semigroup generator which is shown in \cite{oldpaper} to be associated with the Biot-Stokes system \eqref{systemfull0}--\eqref{IC4*}).
The differential action of our prospective generator $\mathcal A$ is: \begin{equation}\label{diffaction}
   \mathcal A\mathbf y\equiv  \begin{pmatrix}
        \vec{w}\\
        -\mathcal{E}_0\vec{u} - \alpha \rho_b^{-1}\nabla p\\
        -\alpha c_0^{-1}\nabla\cdot \vec{w} -A_0p\\
        \rho_f^{-1}\nu \Delta \vec{v} - \rho_f^{-1}\nabla\pi(p,\bv)
    \end{pmatrix}
    =
    \begin{pmatrix}\bw_1^*\\\bw_2^*\\q^*\\\mathbf f^*\end{pmatrix} \in X, \quad \vec{y} = [\vec{u},\vec{w},p,\vec{v}]^T \in \mathcal{D}(\mathcal{A}),
\end{equation}
where we denote $\pi = \pi(p,\vec{v})\equiv \Pi_1p+\Pi_2 \mathbf v+\Pi_3\mathbf v \in L^2(\Omega_f)$ as a solution to the following elliptic problem, for given $p$ and $\vec{v}$ emanating from $\vec{y} \in \mathcal{D}(\mathcal{A})$ (i.e., having adequate regularity to form the RHS below):
\begin{equation}\label{pi}
    \begin{cases}
        \Delta \pi = 0 &\in L^2(\Omega_f),\\
        \partial_{\vec{e}_3}\pi = \nu \Delta \vec{v} \cdot\vec{e}_3& \in H^{-3/2}(\Gamma_f),\\
        \pi = p + 2\nu\vec{e}_3\cdot \vec{D}(\vec{v})\vec{e}_3  &\in H^{-1/2}(\Gamma_I),\\
        \pi  \in H^{-1/2}_{\#}(\partial \Omega_f).
    \end{cases}
\end{equation}

From the membership $\mathcal A\mathbf y \in X$, we  then read off:  $$\vec{w} = \bw_1^\ast \in \vec{U} \subset \vec{H}_\#^1(\Omega_b,$$ 
and the elliptic system
$$\begin{cases}
        -\mathcal{E}_0\vec{u} - \alpha\nabla p = \bw_2^\ast & \in \vec{L}^2(\Omega_b),\\
        -A_0p = q^\ast + \alpha\nabla\cdot{\bw_1^*} &\in L^2(\Omega_b),\\
        \rho_f^{-1}[\nu \Delta \vec{v} - \nabla\pi] = \mathbf f^\ast &\in \vec{V},\\
        \nabla\cdot\vec{v} = 0,\\
     p\big|_{\Gamma_b} = 0,\quad \vec{v}|_{\Gamma_f} = 0, \quad \vec{u}|_{\Gamma_b} = 0\\
        \vec{v}\cdot\vec{e}_3 +  k\partial_{\vec{e}_3} p = \vec{w}\cdot\vec{e}_3 &\in H^{-1/2}(\Gamma_I),\\
        \beta\vec{v}\cdot\btau +  \btau \cdot [2\nu\vec{D}(\vec{v})-\pi]\vec{e}_3 =  \beta \vec{w}\cdot\btau&\in H^{-1/2}(\Gamma_I),\\
        \sigma_b\vec{e}_3 = [2\nu\mathbf D(\bv)-\pi]\mathbf e_3 & \in \vec{H}^{-1/2}(\Gamma_I),\\
        p = - 2\nu\vec{e}_3\cdot\mathbf D(\bv)\mathbf e_3+\pi &\in H^{-1/2}(\Gamma_I).
    \end{cases} $$

This provides a definition of the domain for $\mathcal A$. All restrictions below are interpreted as traces, in the generalized sense \cite{lionsmag}, where appropriate.
\begin{definition}[Domain of $\cA$] \label{diffdomain} Let $\vec{y}\in X$. Then $\vec{y} = [\vec{u},\vec{w},p,\vec{v}]^T \in \mathcal{D}(\mathcal{A})$ if and only if the following bullets hold:
\begin{itemize}
    \item $\vec{u} \in \vec{U}$  with  $\mathcal{E}_0(\vec{u}) \in \vec{L}^2(\Omega_b)$ (so that $[{\sigma_b(\vec{u})\mathbf n}]\big|_{\partial\Omega_b} \in \vec{H}^{-1/2}(\partial \Omega_b)$);
    \item $\vec{w} \in \vec{U}$;
    
    \item $p \in  H_\#^1(\Omega_b)$ with $A_0p \in L^2(\Omega_b)$ (so that $\left.\partial_{\mathbf n}p\right|_{\partial\Omega_b} \in {H}^{-1/2}(\partial \Omega_b)$);
    
        \item $[{\sigma_b(\vec{u})\mathbf n}] \in \vec{H}_{\#}^{-1/2}(\partial \Omega_b)$ (then  
       $\sigma^E(\bu)\mathbf n\cdot \mathbf n \in H^{-1/2}_{\#}(\partial \Omega_b)$);
       \item  $\partial_{\mathbf n} p \in H^{-1/2}_{\#}(\partial \Omega_b)$
    
    \item $\vec{v} \in \vec{H}_\#^1(\Omega_f) \cap \vec{V}$ with $\vec{v}|_{\Gamma_f} = \vec{0}$;
    \item There exists $\pi \in L^2(\Omega_f)$ such that $$\nu\Delta \vec{v} - \nabla\pi \in \vec{V},$$ where $\pi=\pi(p,\bv)$ is characterized via the BVP \eqref{pi} (and so
 $\left.\sigma_f\mathbf n\right|_{\partial \Omega_f}\in \vec{H}^{-\frac{1}{2}}(\partial\Omega_f)$ and  $\left.\pi\right|_{\partial \Omega_f} \in H^{-\frac{1}{2}}(\partial\Omega_f)$;
   
    \item $2\nu \mathbf D(\bv) \big|_{\partial \Omega_f} \in \mathbf H^{-1/2}_{\#}(\partial \Omega_f)$ and ~$\pi \big|_{\partial\Omega_f} \in H^{-1/2}_{\#}(\partial \Omega_f)$;
    
    \item  $\left.\Delta \vec{v}\cdot\mathbf n\right|_{\partial \Omega_f} \in H^{-3/2}(\partial\Omega_f)$);
    
    \item  $\restri{(\vec{v} - \vec{w})\cdot\vec{e}_3} = \restri{-k\nabla p\cdot\vec{e}_3} \in H^{-1/2}(\Gamma_I)$;
    
    \item $\restri{\beta(\vec{v}-\vec{w})\cdot\btau} = \restri{-\btau\cdot\sigma_f\vec{e}_3} \in H^{-1/2}(\Gamma_I)$;
    
    \item $\restri{-\vec{e}_3 \cdot \sigma_f\vec{e}_3} = \restri{p} \in H^{-1/2}(\Gamma_I)$;

    \item $\restri{\sigma_b\vec{e}_3} = \restri{\sigma_f\vec{e}_3} \in \vec{H}^{-1/2}(\Gamma_I)$.

\end{itemize}
\end{definition}
\noindent The above terms with  traces in  Sobolev spaces of negative indices are well-defined (as extensions) when $\vec{y} \in \mathcal{D}(\cA)$.

\subsection{Statement of Previous Results \cite{oldpaper}}

Our central supporting theorem is that of semigroup generation for  $\cA$. 
\begin{theorem}
\label{th:main1}
    The operator $\mathcal A$ on $X$,defined by \eqref{diffaction}, with domain $\mathcal D(\mathcal A)$ given in Definition \ref{diffdomain}, is the generator of a strongly continuous semigroup $\{e^{\cA t}: t\geq 0\}$ of contractions on $X$. Thus, for $\mathbf y_0 \in \mathcal D(\mathcal A)$, we have $e^{\mathcal A\cdot}\mathbf y_0 \in C([0,T];\mathcal D(\mathcal A))\cap C^1((0,T);X)$ satisfying \eqref{cauchy} in the {\em strong sense} with $\mathcal F=[\mathbf 0,\mathbf 0, 0,\mathbf 0]^T$; similarly, for $\mathbf y_0 \in X$, we have $e^{\mathcal A\cdot}\mathbf y_0 \in C([0,T];X)$ satisfying \eqref{cauchy} in the {\em generalized sense} with $\mathcal F=[\mathbf 0, \mathbf 0, 0,\mathbf 0]^T$.
\end{theorem}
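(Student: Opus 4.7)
My plan is to apply the Lumer--Phillips theorem on the Hilbert space $X$, which requires verifying (i) dissipativity of $\mathcal A$ with respect to the inner product \eqref{innerX}, and (ii) the maximality condition $\mathrm{Ran}(\lambda I - \mathcal A) = X$ for some $\lambda>0$. Once these hold, the contraction semigroup $\{e^{\mathcal A t}\}_{t\ge 0}$ follows, and the stated strong- and generalized-solution regularity are standard corollaries of Hille--Yosida--Phillips theory.

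For dissipativity, I would take $\mathbf y = [\bu,\bw,p,\bv]^T \in \mathcal D(\mathcal A)$ and compute $(\mathcal A\mathbf y,\mathbf y)_X$ using \eqref{innerX}. The identifications $\rho_b \mathcal E_0 = -\nabla\cdot\sigma^E$ and $c_0 A_0 = -k\Delta$ from \eqref{ops} make the density weights cancel, producing
\begin{equation*}
(\mathcal A\mathbf y,\mathbf y)_X = a_E(\bw,\bu) + (-\nabla\cdot\sigma^E(\bu) - \alpha\nabla p,\bw)_{\mathbf L^2(\Omega_b)} + (-\alpha\nabla\cdot\bw - k\Delta p,p)_{L^2(\Omega_b)} + (\nu\Delta\bv - \nabla\pi,\bv)_{\mathbf L^2(\Omega_f)}.
\end{equation*}
Successive integrations by parts (invoking the homogeneous outer boundary conditions and lateral periodicity in Definition~\ref{diffdomain}) eliminate the symmetric bulk contributions from the elastic component, while the mixed $\alpha$--terms combine with the elastic boundary flux to leave the net interface trace $-\langle \sigma_b\mathbf e_3,\bw\rangle_{\Gamma_I}$; the Darcy and Stokes integrations produce $-k\|\nabla p\|_{0,b}^2 - 2\nu\|\mathbf D(\bv)\|_{0,f}^2$ together with residual traces $\langle(\bv-\bw)\cdot\mathbf e_3,p\rangle_{\Gamma_I}$ and $\langle \sigma_f \mathbf e_3,\bv\rangle_{\Gamma_I}$. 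Applying stress matching \eqref{IC3*}, the BJS condition \eqref{IC2*}, and the normal-stress identity \eqref{IC4*} then makes all remaining interface terms telescope to $-\beta\|(\bv-\bw)\cdot\btau\|_{\Gamma_I}^2$, yielding
\begin{equation*}
(\mathcal A\mathbf y,\mathbf y)_X = -k\|\nabla p\|_{0,b}^2 - 2\nu\|\mathbf D(\bv)\|_{0,f}^2 - \beta\|(\bv-\bw)\cdot\btau\|_{\Gamma_I}^2 \le 0,
\end{equation*}
which is exactly the formal dissipator $d_0^t/dt$ of \eqref{dissipator} evaluated at $\mathbf y$.

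The maximality step is the principal technical obstacle. Given $\mathbf f = [\mathbf f_1,\mathbf f_2,g,\mathbf h]^T \in X$ and a fixed $\lambda>0$, I would solve $(\lambda I - \mathcal A)\mathbf y = \mathbf f$ by first using the first row to write $\bw = \lambda\bu - \mathbf f_1 \in \mathbf U$, and then recasting the remaining elliptic system for $(\bu,p,\bv)$ as a mixed variational problem on $\mathbf U \times H^1_{\#,*}(\Omega_b) \times (\mathbf H^1_{\#,*}(\Omega_f) \cap \mathbf V)$, in which the Stokes pressure $\pi$ is eliminated a priori through the Green's maps $\Pi_1,\Pi_2,\Pi_3$ of \eqref{PiOps} via the boundary value problem \eqref{pi}. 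The associated bilinear form is bounded, and becomes coercive for $\lambda$ sufficiently large, with coercivity supplied by Korn's and Poincar\'e's inequalities for the elastic and Stokes components, the $\lambda$-weighted $L^2$ terms for $\bu,p,\bv$, and the BJS boundary contribution controlling the tangential slip trace on $\Gamma_I$; Lax--Milgram then delivers a unique variational solution. The crucial, and least routine, step is the a posteriori verification that this solution in fact lies in $\mathcal D(\mathcal A)$ as detailed in Definition~\ref{diffdomain}: one must recover the interior elliptic regularity for $\mathcal E_0\bu$, $A_0 p$, and $\nu\Delta\bv - \nabla\pi$, extract the interface traces in the appropriate $H^{-1/2}_\#$ spaces of \eqref{tracespaces}, and read off the stress-, flux-, and pressure-matching identities by testing against functions supported near $\Gamma_I$.

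With dissipativity and maximality both established, Lumer--Phillips yields the $C_0$-semigroup of contractions on $X$. The strong-solution regularity for $\mathbf y_0 \in \mathcal D(\mathcal A)$, namely $e^{\mathcal A \cdot}\mathbf y_0 \in C([0,T];\mathcal D(\mathcal A))\cap C^1((0,T);X)$ solving \eqref{cauchy} pointwise, and the generalized-solution regularity $e^{\mathcal A \cdot}\mathbf y_0 \in C([0,T];X)$ for $\mathbf y_0 \in X$, then follow from standard semigroup corollaries. I anticipate that the main difficulty lies entirely in the maximality step---specifically, in handling the non-local Green's-map representation of the Stokes pressure $\pi$ so that its trace on $\Gamma_I$ couples correctly to $p$ and $\bv$ in the sense of Definition~\ref{diffdomain}, rather than in the dissipativity calculation, whose interface cancellations are dictated directly by the design of $\mathcal D(\mathcal A)$.
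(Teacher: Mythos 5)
Your plan matches the approach taken in the cited source \cite{oldpaper}: the present paper does not reprove Theorem~\ref{th:main1} but imports it, and the surrounding text confirms that the generation result is obtained via Lumer--Phillips with exactly the dissipativity computation you outline (your boundary-term bookkeeping is correct: the $\alpha$-coupling folds $\sigma^E$ into $\sigma_b$, the Darcy and normal-stress traces cancel, and only the BJS slip survives) together with a range/maximality argument using the Green's maps of \eqref{PiOps} and \eqref{pi}. The one point of divergence worth noting is your invocation of Lax--Milgram: after you announce a \emph{mixed} variational problem, the paper's own analogous argument (in the Appendix, which mirrors \cite[Secs.~4.2.1--4.2.2]{oldpaper}) keeps the Stokes pressure as a Lagrange-multiplier unknown and invokes the Babu\v{s}ka--Brezzi theorem rather than Lax--Milgram; if you genuinely eliminate $\pi$ a priori through $\Pi_1,\Pi_2,\Pi_3$ and work entirely in the divergence-free space $\mathbf V$, a coercive form plus Lax--Milgram can work, but you then still owe a de Rham--type argument to recover $\pi\in L^2(\Omega_f)$ and verify membership in $\mathcal D(\mathcal A)$, which is precisely what the saddle-point formulation packages automatically.
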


As a corollary, we  obtain strong and mild (or generalized) solutions to \eqref{systemfull}--\eqref{IC4*}, including  in the non-homogeneous case with generic forces. By strong solutions to  \eqref{systemfull}--\eqref{IC4*} we mean weak solutions, as in Definition \ref{weaksols}, with enough regularity that the equations in \eqref{systemfull}--\eqref{IC4*} also hold  point-wise in space-time. Through limiting procedures---using smooth data for the semigroup when $c_0>0$, and subsequently using a ``vanishing compressibility" argument as $c_0 \searrow 0$---we can obtain weak solutions for all $c_0 \ge 0$. This  is the main result of \cite{oldpaper}: weak existence of \eqref{systemfull}--\eqref{IC4*}, independent of the storage parameter $c_0\ge 0$.
\begin{theorem}\label{th:main2} Suppose $\mathbf y_0 \in X$ and $\mathbf F_f \in L^2(0,T;\vec{H}^1_{\#,\ast}(\Omega_f)\cap \vec{V})'$, $\mathbf F_b \in L^2(0,T;\mathbf L^2(\Omega_b))$, and $S \in L^2(0,T; [H^{1}_{\#,*}(\Omega_b)]')$.
   Then for any $c_0\ge 0$, the dynamics admit weak solutions, as in Definition \ref{weaksols}. A constructed weak solution satisfies the energy inequality in \eqref{eidenttt}. 
\end{theorem}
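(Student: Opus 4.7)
The plan is a two-stage approximation procedure built on Theorem~\ref{th:main1}. First, for $c_0 > 0$, we approximate general finite-energy data by smooth data in $\mathcal{D}(\cA)$ and pass to the limit in the weak formulation \eqref{weakform}. Second, for $c_0 = 0$, we perform a vanishing-compressibility argument by sending $c_0 \searrow 0$ through a sequence of weak solutions supplied by the first stage.

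In the first stage, given $\by_0 \in X$ and sources $(\bF_b, S, \bF_f)$ of the stated regularity, we select approximants $\by_0^n \in \mathcal{D}(\cA)$ with $\by_0^n \to \by_0$ in $X$, and smooth forcings $(\bF_b^n, S^n, \bF_f^n)$ converging in the relevant dual spaces. Theorem~\ref{th:main1}, combined with standard inhomogeneous semigroup theory, produces strong solutions $\by^n = [\bu^n,\bw^n,p^n,\bv^n]^T$. For these strong solutions the energy identity is rigorously obtained by testing \eqref{systemfull}--\eqref{systemfull1} with $(\bu_t^n, p^n, \bv^n)$, integrating by parts, and invoking the pointwise interface conditions \eqref{IC1*}--\eqref{IC4*}; a routine Young inequality then yields the $n$-indexed analogue of \eqref{eidenttt}. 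The ensuing uniform bounds supply weak and weak-$*$ subsequential limits in $\mathcal{V}_{\text{sol}}$, and the dissipator $d_0^t$ furnishes additional $L^2(0,T;L^2(\Gamma_I))$-control over the tangential slip $(\bv^n-\bu_t^n)\cdot \btau$. Passing to the limit in \eqref{weakform}: the interior bilinear terms close from the weak convergences; the interface terms in $\bu$ (rather than $\bu_t$) are handled by the temporal integration by parts already present in the definition; and the slip term closes by combining the $L^2(\Gamma_I)$-control of $\bv^n\cdot\btau$, coming from $\mathcal V_f$ and trace theory, with the slip control above, which in turn forces $\bu_t^n\cdot\btau$ to be uniformly bounded in $L^2(0,T;L^2(\Gamma_I))$—securing part~(2) of Definition~\ref{weaksols} in the limit.

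In the second stage, we apply the first-stage result along a sequence $c_0^n \searrow 0$, with compatible initial fluid content $d_0^n := c_0^n p_0^n + \alpha \nabla \cdot \bu_0$ (choosing $p_0^n$ so that $c_0^n \|p_0^n\|_{0,b}^2 \to 0$). The structure of \eqref{eidenttt} ensures that only the compressibility term $c_0^n \|p^n\|_{0,b}^2$ degenerates as $n\to\infty$: the parabolic dissipation $k\|\nabla p^n\|_{0,b}^2$ retains uniform $L^2(0,T; H^1_{\#,*}(\Omega_b))$-control of $p^n$, and all other energies and dissipations remain uniformly bounded. Extracting subsequential weak limits and passing through \eqref{weakform}, the term $((c_0^n p^n, \partial_t q_b))_{\Omega_b}$ vanishes, the initial data converge as $(c_0^n p^n + \alpha \nabla \cdot \bu^n)(0) \to \alpha \nabla \cdot \bu_0$, and the remaining terms are treated exactly as in the first stage. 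The energy inequality \eqref{eidenttt} is inherited by the limit via weak/weak-$*$ lower semicontinuity of the constituent norms.

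The main obstacle throughout is the closure of the interface coupling—in particular, ensuring that the weak limits of $\bu_t^n \cdot \btau$ and of the normal trace $\bv^n \cdot \mathbf{e}_3$ correctly realize the BJS slip and kinematic-matching conditions in the limiting object, and that the time-differentiated Dirichlet traces (which are not a priori defined for weak solutions) are absorbed into the test function through the integration-by-parts structure of \eqref{weakform}. Once this interface closure is secured, the limit satisfies Definition~\ref{weaksols} together with \eqref{eidenttt}, completing the proof.
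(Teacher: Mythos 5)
Your proposal is correct and matches the method the paper attributes to \cite{oldpaper}: the paper does not itself prove Theorem~\ref{th:main2}, but states it as a previous result and sketches the route as approximation by $\mathcal D(\cA)$ data via the semigroup for $c_0>0$, followed by a vanishing-compressibility limit as $c_0\searrow 0$, which is exactly your two-stage plan. Your observation that the dissipator $d_0^t$ together with the $L^2(0,T;L^2(\Gamma_I))$ control of $\gamma_0[\bv^n]\cdot\btau$ furnishes the uniform tangential-trace bound securing item~(2) of Definition~\ref{weaksols} also aligns with the paper's remark that the constructed solution of \cite{oldpaper} satisfies this property even though it was not part of the definition there.
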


\begin{remark}
The above theorem provides existence of weak solutions---in the sense of Definition \ref{weaksols}---for all values of $c_0\ge 0$. In an indirect way, we can provide an alternative construction for $c_0>0$ as follows. We know that $\mathcal A$ is a generator of a $C_0$-semigroup on $X$. We provide a characterization of the adjoint operator $\mathcal A^*$ on $\mathcal D(\mathcal A^*)$ in the Appendix, so Ball-type weak solutions, as in \cite{ball}, follow immediately. However, in Section \ref{ballproof}, we show that weak solutions in the sense of Definition \ref{weaksols} are can also be identified in some sense with semigroup solutions. Thus, in this case, all notions of weak solution coincide, so the abstract notion of semigroup weak solution in \cite{ball} obtains the same weak solution as those constructed in \cite{oldpaper}, but only after the calculations presented in the treatment at hand.
\end{remark}

\section{Main Result and Proof}
\begin{theorem}\label{th:main} Suppose $\mathbf y_0 \in X$ and $\mathbf F_f \in L^2(0,T;\vec{H}^1_{\#,\ast}(\Omega_f)\cap \vec{V})'$, $\mathbf F_b \in L^2(0,T;\mathbf L^2(\Omega_b))$, and $S \in L^2(0,T; [H^{1}_{\#,*}(\Omega_b)]')$.
   Then for any $c_0\ge 0$, the weak solution described in Theorem \ref{th:main1} is unique. \end{theorem}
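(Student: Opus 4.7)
By linearity it suffices to show that a weak solution with zero data ($\mathbf y_0=0$, $\mathbf F_b=\mathbf F_f=0$, $S=0$) must vanish. I will split on the storage coefficient $c_0$, using two distinct methodologies as signaled in the introduction.

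\textbf{Case $c_0>0$.} Here the plan is to exploit the semigroup structure through Ball's characterization. Since $\mathcal A$ generates a $C_0$-semigroup on $X$ by Theorem \ref{th:main1}, it has a densely defined adjoint $\mathcal A^*$. The first step is to compute $\mathcal D(\mathcal A^*)$ and the differential action of $\mathcal A^*$ explicitly, tracking carefully the nonlocal Green's operators $G_1,G_2,G_3$ that eliminated $p_f$, together with the adjoint form of the interface conditions \eqref{IC1*}--\eqref{IC4*}; this bookkeeping is the main technical burden of the case, and I would relegate it to the Appendix. A Ball weak solution is then a function $\mathbf y \in C([0,T];X)$ satisfying
\[
(\mathbf y(t),\boldsymbol\Psi(t))_X - (\mathbf y(0),\boldsymbol\Psi(0))_X = \int_0^t\big[(\mathbf y,\mathcal A^*\boldsymbol\Psi)_X + (\mathcal F,\boldsymbol\Psi)_X\big]\,d\tau
\]
for every $C^1$ test path $\boldsymbol\Psi$ valued in $\mathcal D(\mathcal A^*)$, and such solutions are automatically unique because they must coincide with $e^{\mathcal A t}\mathbf y_0$. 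The remaining task is to show that any weak solution in the sense of Definition \ref{weaksols} is also a Ball weak solution: given $\boldsymbol\Psi$, I will unpack its components and use the characterization of $\mathcal A^*$ together with integration by parts to reduce the Ball identity to \eqref{weakform} for a particular physical test $[\bxi,q_b,\bzeta]^T$ read off from $\boldsymbol\Psi$. The delicate part is matching the interface contributions on both sides—particularly those arising from the tangential slip and from $\mathbf e_3\cdot\sigma_f\mathbf e_3$ mediated through $G_1,G_2,G_3$.

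\textbf{Case $c_0=0$.} Here no proper semigroup is available, so I would follow the component-wise decoupling strategy of \cite{bmw}, working directly with the null-data weak formulation to extract enough regularity to justify a self-testing energy identity. The essential new ingredient at $c_0=0$ is that the pressure equation reduces to $\alpha\nabla\cdot\bu_t=\nabla\cdot(k\nabla p)$; testing \eqref{weakform} with $\bxi=0$, $\bzeta=0$, and arbitrary $q_b \in C_0^1([0,T];H^1_{\#,*}(\Omega_b))$, one obtains $\nabla\cdot\bu_t\in L^2(0,T;L^2(\Omega_b))$—control that is unavailable when $c_0>0$, where only the sum $\zeta=c_0p+\alpha\nabla\cdot\bu$ is controlled in time. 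A dual elastic test with $q_b=0$, $\bzeta=0$ gives $\bu_{tt}$ as an element of a suitable dual space; combined with property (2) of Definition \ref{weaksols} (which furnishes $\gamma_0[\bu]_t\cdot\btau \in L^2(0,T;L^2(\Gamma_I))$) and the Temam boosting result \cite{temam1}, I can upgrade $\bu \in C([0,T];\mathbf U)$ and $\bu_t \in C([0,T];\mathbf L^2(\Omega_b))$. An analogous Stokes test with $\bxi=0$, $q_b=0$ produces $\bv \in C([0,T];\mathbf V)$.

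With these regularity upgrades, the final step is to justify the formal energy identity rigorously by regularizing each component and testing the elastic, pressure, and Stokes parts with $(\bu_t,p,\bv)$, carefully pairing interface contributions so that the stress-matching, slip, and pressure-normal-stress terms cancel as they do formally via \eqref{IC1*}--\eqref{IC4*}. The main obstacle is exactly this cancellation: individual interface terms such as $\langle\sigma_b\mathbf e_3,\bu_t\rangle_{\Gamma_I}$ need not make sense on their own for weak-regularity solutions, so the cancellations must be performed at the level of the matched Biot--Stokes--coupling sums before any limit in the smoothing procedure is taken. Once the null-data energy identity is established, the dissipator $d_0^t$ and vanishing initial energy force $\mathbf y\equiv 0$. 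Continuous dependence in both cases then follows immediately from the previously established energy inequality \eqref{eidenttt} by linearity/superposition.
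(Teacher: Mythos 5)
Your high-level plan is the same as the paper's: split on $c_0$, use Ball's semigroup weak-solution characterization via the adjoint $\mathcal A^*$ for $c_0>0$, and use a Temam-type regularity boost for $c_0=0$. The $c_0>0$ case is essentially the paper's argument, including the strategy of passing to static-in-space test functions $\phi\,\boldsymbol\Psi$ with $\boldsymbol\Psi\in\mathcal D(\mathcal A^*)$ and the relegation of the adjoint characterization to the Appendix. No issues there.

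For $c_0=0$, two concrete problems. First, the claim that testing with $\bxi=\bzeta=\mathbf 0$ and arbitrary $q_b$ gives $\nabla\cdot\bu_t \in L^2(0,T;L^2(\Omega_b))$ is too strong: from $\alpha\nabla\cdot\bu_t=\nabla\cdot(k\nabla p)$ and $p\in L^2(0,T;H^1_{\#,*}(\Omega_b))$ you only get $\nabla\cdot\bu_t$ in $L^2(0,T;H^{-1})$ (plus interface contributions). The paper never asserts the $L^2$-in-space version, and instead extracts the needed information in dual trace spaces; in particular it shows $\bu_t\cdot\mathbf e_3 \in L^2(0,T;[\gamma_0(H^1_{\#,*}(\Omega_b))]')$ via a Lax-type duality argument using a continuous right inverse of the trace map.

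Second, and more fundamentally, you have the cancellation mechanism backwards, and this is where the substance of the $c_0=0$ proof lives. You write that individual interface pairings such as $\langle\sigma_b\mathbf e_3,\bu_t\rangle_{\Gamma_I}$ need not make sense on their own and that the cancellations should be performed ``at the level of the matched Biot--Stokes--coupling sums before any limit in the smoothing procedure is taken.'' The paper does exactly the opposite: its central technical step is to show that the single pairing $t\mapsto\langle\sigma^E(\bu)\mathbf e_3,\bu_t\rangle$ \emph{is} a well-defined function in $L^1(0,T)$. This is accomplished by (i) establishing $\bu_{tt}\in L^2(0,T;\mathbf U')$, $\nabla\cdot\sigma^E(\bu)\in L^2(0,T;\mathbf U')$, and $\sigma^E(\bu)\mathbf e_3\in L^2(0,T;[\gamma_0(\mathbf U)]')$; (ii) deriving from the pressure weak form a trace identity for $\bu_t\cdot\mathbf e_3$ (and its regularity in the dual trace space); (iii) combining the elastic and pressure weak forms with the identity \eqref{tousemin} into the relation \eqref{semi}; and (iv) approximating $\bu_t$ in $L^2(0,T;\mathbf L^2(\Omega_b))$ by smooth test functions $\bxi_n$ and passing to the limit in \eqref{semi}. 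Only after this chain of identities are the hypotheses (s.i)--(s.iii) of Temam's Lemma 4.1 verified, and without (s.iii)---the $L^1(0,T)$ integrability of that specific boundary pairing---the boost to $\bu\in C([0,T];\mathbf U)\cap C^1([0,T];\mathbf L^2(\Omega_b))$ and the energy identity \eqref{A} do not follow. Your proposal treats the Temam boost as following from $\bu_{tt}$ in a dual space plus property (2) of Definition \ref{weaksols}, which skips this entire verification; and your ``regularize each component'' plan for the energy identity is not how the paper proceeds and would run into the same obstruction (of making sense of the boundary pairings) at the moment of passing the regularization to the limit.
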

   Since the previously constructed solution satisfies the energy inequality in \eqref{eidenttt} and the problem is linear, continuous dependence in the sense of the \eqref{eidenttt} follows immediately from uniqueness. This provides as a final corollary the main result in this paper.
   \begin{corollary}
   Let $c_0 \ge 0$. Consider the initial boundary value problem in \eqref{systemfull}--\eqref{IC4*}, with data $\mathbf [\bu_0,\bu_1,d_0,\bv_0]^T \in X$,~ $\mathbf F_f \in L^2(0,T;\vec{H}^1_{\#,\ast}(\Omega_f)\cap \vec{V})'$, $\mathbf F_b \in L^2(0,T;\mathbf L^2(\Omega_b))$, and $S \in L^2(0,T; [H^{1}_{\#,*}(\Omega_b)]')$. This problem is {\em weakly well-posed} in the sense of Definition \ref{weaksols}, with continuous dependence holding in the sense of \eqref{eidenttt}. 
   \end{corollary}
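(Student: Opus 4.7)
The plan is to assemble the corollary directly from the two theorems already stated in the excerpt, together with the linearity of the problem. Existence of a weak solution, for every $c_0\ge 0$ and for data $[\bu_0,\bu_1,d_0,\bv_0]^T \in X$, $\mathbf F_f \in L^2(0,T;(\vec{H}^1_{\#,\ast}(\Omega_f)\cap \vec{V})')$, $\mathbf F_b \in L^2(0,T;\mathbf L^2(\Omega_b))$, and $S \in L^2(0,T; [H^{1}_{\#,*}(\Omega_b)]')$, is furnished verbatim by Theorem \ref{th:main2}, which moreover asserts that the constructed weak solution obeys the energy inequality \eqref{eidenttt}. Uniqueness is furnished verbatim by Theorem \ref{th:main}. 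Thus the constructed weak solution is the unique element of $\mathcal V_{\text{sol}}$ solving \eqref{systemfull}--\eqref{IC4*} in the sense of Definition \ref{weaksols}, and we may unambiguously speak of \emph{the} weak solution associated with a given datum.

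For continuous dependence, I would exploit the linearity of \eqref{systemfull}--\eqref{IC4*} together with the linearity of Definition \ref{weaksols}. Let $[\bu^{(i)},p^{(i)},\bv^{(i)}]^T$, $i=1,2$, be the unique weak solutions associated with data $([\bu_0^{(i)},\bu_1^{(i)},d_0^{(i)},\bv_0^{(i)}]^T; \mathbf F_b^{(i)}, S^{(i)}, \mathbf F_f^{(i)})$. A direct inspection of the weak form \eqref{weakform} shows that the difference
\[
[\bu,p,\bv]^T := [\bu^{(1)}-\bu^{(2)},\,p^{(1)}-p^{(2)},\,\bv^{(1)}-\bv^{(2)}]^T
\]
is itself a weak solution in $\mathcal V_{\text{sol}}$ with initial data $[\bu_0^{(1)}-\bu_0^{(2)},\bu_1^{(1)}-\bu_1^{(2)},d_0^{(1)}-d_0^{(2)},\bv_0^{(1)}-\bv_0^{(2)}]^T \in X$ and source data given by the corresponding differences; this is routine because each bilinear term, trace term, and duality pairing in \eqref{weakform} depends linearly on the unknowns.

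By uniqueness (Theorem \ref{th:main}), the difference must coincide with the weak solution constructed by Theorem \ref{th:main2} for this difference datum, and therefore satisfies the energy inequality \eqref{eidenttt}. Writing this out gives
\[
e_{\text{diff}}(t) + d_{0,\text{diff}}^{\,t} \lesssim e_{\text{diff}}(0) + \int_0^t \Big[\|\mathbf F_f^{(1)}-\mathbf F_f^{(2)}\|_{(\vec{H}^1_{\#,\ast}(\Omega_f)\cap \vec{V})'}^2 + \|\mathbf F_b^{(1)}-\mathbf F_b^{(2)}\|_{\mathbf L^2(\Omega_b)}^2 + \|S^{(1)}-S^{(2)}\|_{[H^1_{\#,*}(\Omega_b)]'}^2\Big]\,d\tau,
\]
where $e_{\text{diff}}$ and $d_{0,\text{diff}}^{\,t}$ are the energy \eqref{energy} and dissipator \eqref{dissipator} evaluated on the difference. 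This is precisely continuous dependence in the topology dictated by \eqref{eidenttt}, yielding weak well-posedness in the sense of Definition \ref{weaksols}.

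The only non-trivial step is the verification that the difference $[\bu,p,\bv]^T$ is a weak solution of the difference problem, and in particular that the auxiliary condition (2) of Definition \ref{weaksols}---$\gamma_0[\bu]_t\cdot\btau \in L^2(0,T;L^2(\Gamma_I))$---is inherited by the difference; this is immediate by linearity of the trace and by closure of $L^2$ under subtraction. Since Theorems \ref{th:main1}--\ref{th:main} have already done the analytical heavy lifting, the corollary is essentially a structural consequence, and no further regularity or compactness machinery is needed.
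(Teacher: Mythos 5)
Your proof is correct and follows essentially the same route as the paper: existence and the energy inequality from Theorem \ref{th:main2}, uniqueness from Theorem \ref{th:main}, and continuous dependence by linearity applied to the difference of two solutions. The extra care you take in checking that condition (2) of Definition \ref{weaksols} is preserved under subtraction is a reasonable addition, though the paper treats the whole corollary as an immediate structural consequence.
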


\subsection{Proof for $c_0=0$ Case: Hyperbolic Regularization}\label{proof1}

Let us now prove that weak solutions are unique. For the proof, we appeal only to Definition \ref{weaksols}, and only to the regularity (and estimates) coming from the solution $[\bu,p,\bv]^T \in \mathcal V_{\text{sol}}$ and the test functions $[\bxi,q,\bzeta]^T \in \mathcal V_{\text{test}}$. We note that, as is often the case in hyperbolic problems, the solution does not possess enough regularity to be used as a test function. Moreover, the traditional approach used for hyperbolic weak solutions is convoluted by the coupled nature of the problem.

\begin{proof}[Proof of Theorem \ref{th:main} with $c_0=0$]
Suppose that we have two weak solutions in the sense of Definition \ref{weaksols}. We consider the difference of weak forms in \eqref{weakform} and, as the problem as linear, we obtain zero data; upon renaming $[\bu,p,\bv]^T \in \mathcal V_{\text{sol}}$ as the difference of two solutions, we obtain the following weak form:
    \begin{align}\label{weakformforu}
        &- \rho_b((\vec{u}_t,\bxi_t))_{\Omega_b} + ((\sigma_b(\vec{u},p), \nabla \bxi))_{\Omega_b} -\alpha ((\nabla\cdot\vec{u}, \partial_t q))_{\Omega_b} + ((k\nabla p,\nabla q))_{\Omega_b} \nn\\
        &- \rho_f((\vec{v},\bzeta_t))_{\Omega_f} + 2\nu((\vec{D}(\vec{v}),\vec{D}(\bzeta)))_{\Omega_f} + (({p,(\bzeta-\bxi)\cdot\vec{e}_3}))_{\Gamma_I} - (({\vec{v}\cdot\vec{e}_3,q}))_{\Gamma_I} \nn\\
        &- (({\vec{u}\cdot\vec{e}_3,\partial_tq}))_{\Gamma_I} + \beta (({\vec{v}\cdot\btau,(\bzeta - \bxi)\cdot\btau}))_{\Gamma_I} + \beta(({\vec{u}\cdot\btau, (\bzeta_t - \bxi_t)\cdot\btau}))_{\Gamma_I}\nn \\
        &= 0,
    \end{align}
    for every test function $[\bxi,q,\bzeta]^T \in \mathcal{V}_\text{test}$. In addition, by definition of weak solution, we may utilize bounds on $(0,T)$ for the following norms:
    \begin{align}\label{normquote}
 \vec{u} \in L^\infty(0,T;\vec{U}),~~ \vec{u}_t \in L^{\infty}(0,T;\vec{L}^2(\Omega_b)), ~~\gamma_0[\bu_t] \in L^2(0,T;L^2(\Gamma_I)),\\ \nn
  p \in L^2(0,T; H^1_{\#,*}(\Omega_b)), ~~c_0^{1/2}p \in L^{\infty}(0,T; L^2(\Omega_b)), ~~
 \bv \in  L^\infty(0,T;\vec{V}), \\ \nn \bv\in L^2(0,T; \vec{H}^1_{\#,\ast}(\Omega_f) \cap \vec{V}).
\end{align}
Of course, we may apply the trace theorem (spatially) to all functions above that have $H^1(\Omega_i)$ regularity.
    
  With zero data, we aim to interpret certain time derivatives distributionally, yielding  additional regularity in certain dual senses. Subsequently, we will be able to reinterpret several terms in the weak form \eqref{weakformforu} which will then allow testing with the solution.

First, taking $\bzeta=\mathbf 0$ and $q=0$ in \eqref{weakformforu}:
\begin{align*}
    0=- \rho_b(( {\vec{u}}_t ,\bxi_t))_{\Omega_b} +  ((\sigma_b( {\vec{u}} ,  p ), \nabla \bxi))_{\Omega_b} 
  -  (({  p ,\bxi\cdot\vec{e}_3}))_{\Gamma_I} 
-\beta \left( \mathbf{v}\cdot \tau ,\xi \cdot \tau \right) _{\Gamma
_{I}}-\beta \left( \mathbf{u}\cdot \tau ,\xi _{t}\cdot \tau \right) _{\Gamma
_{I}}.
\end{align*}
Subsequently  integrating by parts and noting that $\mathbf{u}(t=0)= \mathbf{0}$ and $\bxi \in C_0^1([0,T);\mathbf U)$, we have
\begin{align} \label{u_mid}
    -~ \rho_b(( {\vec{u}}_t ,\bxi_t))_{\Omega_b} =&~- ((\sigma_b( {\vec{u}} ,  p ), \nabla \bxi))_{\Omega_b} 
  + (({  p ,\bxi\cdot\vec{e}_3}))_{\Gamma_I} - \beta (([ {{\bu}_t-\vec{v}] \cdot\btau,\bxi\cdot\btau}))_{\Gamma_I}.
\end{align}
Finally, restricting to $\bxi \in C_0^{\infty}(0,T; \mathbf U)$, we obtain the estimate: \begin{equation*} 
\left|(( {\bu}_t,\bxi_t))_{\Omega_b}\right| \lesssim C\left(|| {\bu}||_{L^2(0,T;\mathbf U)}+||  p||_{L^2(0,T;\mathbf H^1_{\#,*}(\Omega_b)}+|| ({\bu}_t- {\bv}) \cdot\btau ||_{L^2(0,T;L^2(\Gamma_I))}\right)||\bxi||_{L^2(0,T;\mathbf U)},
\end{equation*}
where we have bounded all RHS terms above via Cauchy-Schwarz, the trace theorem,  Poincar\'e's inequality, and assumed bounds on the weak solution $[\bu,p,\bv]^T $. From this estimate, we then infer that Biot component $\bu$ of weak solution $[  \bu,   p,  {\bv}] \in \mathcal V_{\text{sol}}$ of \eqref{weakformforu} (and \eqref{u_mid}) satisfies  
\begin{equation} \label{u_tt}
 {\bu}_{tt} \in L^2(0,T; \mathbf U'),
\end{equation}
with associated bound. (In making this deduction, we are implicitly considering, from the definition of the weak solution, that $\bu_t\cdot \btau \in L^2(0,T; L^2(\Gamma_I))$).

In addition, with given test function $\bxi$ in $C_0^{\infty}((0,T)\times \Omega_b)$, we infer from \eqref{u_mid} the relation,
\begin{align*}
    ~(\langle  \rho_b {\vec{u}}_{tt},\bxi \rangle)_{\Omega_b}   -  (\langle\nabla \cdot \sigma^E(\bu) ,\bxi \rangle)_{\Omega_b}  
  = - \alpha((\nabla p, \bxi))_{\Omega_b}.
\end{align*}
Since $\nabla p \in L^2(0,T;\mathbf L^2(\Omega_b))$ for a weak solution,  we then have

\begin{equation} \label{higher}
\rho_b {\vec{u}}_{tt} -  \nabla \cdot \sigma^E(\bu) = -\alpha \nabla p \in L^2(0,T;\mathbf L^2(\Omega_b)).
\end{equation}

In turn,  \eqref{u_tt} and \eqref{higher} give

\begin{equation} \label{second}
  \nabla \cdot \sigma^E(\bu) \in L^2(0,T; \mathbf U').
\end{equation}

\medskip

 Subsequently, we can give a meaning to the flux term $\left[  \sigma ^{E}(\mathbf{\bu })\mathbf e_3\right] _{\Gamma _{I}}$. To this end, we invoke the trace space  of $\mathbf U = \mathbf H^1_{\#,*}(\Omega_b)$; namely, the image $\gamma _{0}[ \mathbf U] $, a closed subspace of $\mathbf H^{\frac{1}{2}}(\partial \Omega _{b})$. We denote its dual by $\gamma_0[\mathbf U]'$.
With this trace space in mind, we consider the continuous right inverse $$
\gamma _{0}^{+}\in \mathscr{L}(\mathbf H^{\frac{1}{2}}(\partial \Omega
_{b}),\mathbf H^{1}(\Omega _{b}));$$ viz., $\gamma _{0} \gamma _{0}^{+}(\mathbf g)= \mathbf g$, for $
\mathbf g\in \mathbf H^{\frac{1}{2}}(\partial \Omega _{b})$. (See 
\cite[Theorem 3.38, p.102]{mclean}.)
Therewith, let $\mathbf{g}\in L^{2}(0,T;\gamma _{0}[\mathbf U])$ be given. Then,%

\[
\left( \left\langle  \sigma ^{E}(\mathbf{u})\mathbf{e}_{3},\mathbf{g}\right\rangle
\right) _{\Gamma _{I}}=-\left( \left( \nabla \cdot \sigma ^{E}(\mathbf{u}%
),\gamma _{0}^{+}(\mathbf{g})\right) \right) _{\Omega _{b}}-\left( \left(
\sigma ^{E}(\mathbf{u}),\mathbf{D}(\gamma _{0}^{+}(\mathbf{g}))\right)
\right) _{\Omega _{b}}. \label{Lax}
\]%

An estimation of right hand side above, in part via \eqref{second}, thus yields,
\begin{equation}
\sigma ^{E}(\mathbf{u}) \mathbf{e}_{3} \in L^{2}(0,T;[\gamma _{0}(\mathbf U)]^{\prime }).  \label{Lax2}
\end{equation}

\medskip

Now, in \eqref{u_mid}, we take $\bxi \in C_0^{\infty}(0,T; \mathbf U)$. Applying \eqref{higher} and \eqref{Lax2} thereto, we may then write:

\begin{equation}
\left( \left\langle  \sigma ^{E}(\mathbf{u})\mathbf{e}_{3} ,\bxi%
\right\rangle \right) _{\Gamma _{I}}-\alpha \left( \left( p,\bxi\cdot 
\mathbf{e}_{3}\right) \right) _{\Gamma _{I}}+\left( \left( p,\bxi\cdot 
\mathbf{e}_{3}\right) \right) _{\Gamma _{I}}-\beta \left( \left( \lbrack 
\mathbf{u}_{t}-\mathbf{v}]\cdot \mathbf{\btau},\bxi\cdot \mathbf{\btau}%
\right) \right) _{\Gamma _{I}}=0.  \label{3.85}
\end{equation}

Or, upon rearrangement, for $\bxi \in C_0^{\infty}(0,T; \mathbf U)$,
\begin{equation}
\begin{array}{l}
(\alpha -1)\left( \left( p,\bxi \cdot \mathbf{e}_{3}\right) \right)
_{\Gamma _{I}}=\left( \left\langle  \sigma^E (\mathbf{u})\mathbf{e}_{3},
\bxi \right\rangle \right) _{\Gamma _{I}} -\beta \left( \left( \lbrack \mathbf{u}_{t}-\mathbf{v}%
]\cdot \btau ,\bxi \cdot \btau \right) \right)
_{\Gamma _{I}}.%
\end{array}
\label{tousemin}
\end{equation}
This identity will be crucial later on.

We will now focus on the Biot pressure equation. First, since we have that $\bu \in L^2(0,T;\mathbf U)$, we have that $\gamma_0[\bu] \in \mathbf L^2(0,T;\mathbf{H}^{1/2}(\Gamma_I))$ by the Sobolev trace theorem (restricting to $\Gamma_I$). Moreover, the fluid content $\nabla\cdot \bu \in L^2(0,T;L^2(\Omega_b))$, by the definition of weak solution. Hence the time derivatives exist in the sense of distributions: $\gamma_0[\bu]_t \in \mathscr D'((0,T) \times \Gamma_I)$ and $\nabla \cdot \bu_t \in \mathscr D'((0,T) \times \Omega_b)$. We can in fact say more about the regularity of these terms.  Invoking the test functions $\bzeta=\bxi=\mathbf 0$ in \eqref{weakformforu}, we obtain
\begin{align*}
   - \alpha ((\nabla\cdot {\vec{u}} , \partial_t q))_{\Omega_b} & 
    - (( {\vec{u}} \cdot\vec{e}_3,\partial_tq))_{\Gamma_I} =   - ((k\nabla   p ,\nabla q))_{\Omega_b} +  (({ {\vec{v}} \cdot\vec{e}_3,q}))_{\Gamma_I} .
\end{align*}    
Restricting to $q \in C_0^{\infty}((0,T); H^1_{\#,*}(\Omega_b))$, we then have
\begin{align*}
    -\alpha (( {\vec{u}}_t, \nabla  q))_{\Omega_b} &+ (\alpha-1) ((\bu\cdot \mathbf e_3, q_t))_{\Gamma_I}
   =   - ((k\nabla   p ,\nabla q))_{\Omega_b} +  (({ {\vec{v}} \cdot\vec{e}_3,q}))_{\Gamma_I}.
\end{align*}    
Thus, the action of the distribution $[\bu_t\cdot \mathbf e_3]$
 is given through the identity
\begin{align}\label{onetouse***}
    (\alpha-1) ((\bu\cdot \mathbf e_3, q_t))_{\Gamma_I}
   =  \alpha (( {\vec{u}}_t, \nabla  q))_{\Omega_b}   - ((k\nabla   p ,\nabla q))_{\Omega_b} +  (({ {\vec{v}} \cdot\vec{e}_3,q}))_{\Gamma_I},\\ \nonumber~~\forall q \in C_0^{\infty}((0,T);H^1_{\#,*}(\Omega_b)).
\end{align}    

 Via \eqref{onetouse***},  we can now characterize the regularity of the distribution $\mathbf u_t\cdot \mathbf e_3$ defined on $\Gamma$. We re-invoke the trace space  $\gamma _{0}[ H_{\#,\ast }^{1}(\Omega _{b}))] $ and continuous right inverse $%
\gamma _{0}^{+}\in \mathscr{L}(H^{\frac{1}{2}}(\partial \Omega
_{b}),H^{1}(\Omega _{b}))$. Taking test function $q\equiv \gamma _{0}^{+}(g)$ in (%
\ref{onetouse***}), where $g\in C_0^{\infty}((0,T);\gamma _{0}[H_{\#,\ast }^{1}(\Omega _{b}])$ is arbitrary, we then have
\begin{equation}
(\alpha -1)\left( \left( \mathbf{u}\cdot \mathbf{e}_{3},g_{t}\right) \right)
_{\Gamma _{I}}=\alpha \left( \left( \mathbf{u}_{t},\nabla \gamma
_{0}^{+}(g)\right) \right) _{\Omega _{b}}-\left( \left( k \nabla
p,\nabla \gamma _{0}^{+}(g)\right) \right) _{\Omega _{b}}+\left( \left( 
\mathbf{v}\cdot \mathbf{e}_{3},g\right) \right) _{\Gamma _{I}}\text{. } \label{extra}
\end{equation}
By the density of $C_0^{\infty}((0,T);\gamma _{0}[H_{\#,\ast }^{1}(\Omega _{b}])$ in $%
L^{2}(0,T;\gamma _{0}[H_{\#,\ast }^{\frac{1}{2}}(\Omega _{b}])$, this relation holds for all \newline
$g$ in $L^2(0,T;\gamma _{0}[H_{\#,\ast }^{\frac{1}{2}}(\Omega
_{b})])$. An estimation of the right hand side of (\ref{extra}) thus yields the conclusion that \begin{equation}\label{traceone}\bu_t\cdot \mathbf e_3 \in L^2(0,T; [\gamma_0[H^1_{\#,*}(\Omega_b)]').\end{equation}

As a consequence of this regularity, as well as the density of $C_0^{\infty}((0,T);H^1_{\#,*}(\Omega_b))$ in $L^2(0,T;H^1_{\#,*}(\Omega_b))$, the relation \refeq{onetouse***} holds for any test function $q \in L^2(0,T;H^1_{\#,*}(\Omega_b))$. Taking now $q= \phi p$, where $p$ is the Biot pressure weak solution and $\phi \in C_0^{\infty}(0,T)$ arbitrary, we then have, 
\begin{align}\label{toplugin}
   (1-\alpha)(\langle \bu_t\cdot \mathbf e_3, p\rangle, \phi )_{\Gamma_I}
   =  \alpha (( {\vec{u}}_t, \nabla  p), \phi )_{\Omega_b}   - ((k\nabla   p ,\nabla p), \phi )_{\Omega_b} +  (({ {\vec{v}} \cdot\vec{e}_3,p}),\phi )_{\Gamma_I}.
\end{align}

We invoke the relation \eqref{tousemin}, with test function $\bxi$ replated by $\phi \bxi$, and $\phi$ as above;  adding that to \eqref{toplugin}, and also adding by zero via \eqref{higher}, we obtain the variational relation:
\begin{align}\nonumber
\left( \left\langle  \sigma ^{E}(\mathbf{u})\mathbf{e}_{3},\bxi%
\right\rangle, \phi \right) _{\Gamma _{I}}= & ~\left( \left( \rho _{b}\mathbf{u}%
_{tt}-\nabla \cdot \sigma ^{E}(\mathbf{u}),\bxi\right), \phi \right) _{\Omega
_{b}} +
(1-\alpha )\left( \left\langle [\mathbf{u}_{t}-\bxi]\cdot \mathbf{e}%
_{3},p\right\rangle, \phi \right) _{\Gamma _{I}} \\ \label{semi} &+\beta \left( \left( \lbrack 
\mathbf{u}_{t} -\mathbf{v}]\cdot \mathbf{\btau},\bxi\cdot \mathbf{\btau}%
\right), \phi \right) _{\Gamma _{I}} 
+\left( \left( k \nabla p,\nabla p\right), \phi \right) _{\Omega
_{b}}-\left( \left( \mathbf{v}\cdot \mathbf{e}_{3},p\right), \phi \right)
_{\Gamma _{I}} \\ & \text{ \ \ for }\bxi\in C_0^{\infty}((0,T);\mathbf{H}_{\#,\ast
}^{1}(\Omega _{b}))\text{.} \nonumber
\end{align}

Now let $\left\{ \bxi_{n}\right\} \subset C_0^{\infty}((0,T);\mathbf{H}_{\#,\ast
}^{1}(\Omega _{b}))$ constitute an approximation of $\mathbf{u}_{t}$ in the sense of $L^2(0,T;\mathbf L^2(\Omega_b))$.
Setting $\bxi\equiv \bxi_{n}$ in (\ref{semi}) and passing to the limit, we
then obtain, for arbitrary $\phi \in C_0^{\infty}(0,T)$,
\begin{align}\nonumber
\left( \left\langle \sigma ^{E}(\mathbf{u}) \mathbf{e}_{3},\mathbf{u}%
_{t}\right\rangle, \phi \right) _{\Gamma _{I}}=&~ \left( \left( \rho _{b}\mathbf{%
u}_{tt}-\nabla \cdot \sigma ^{E}(\mathbf{u}),\mathbf{u}_{t}\right), \phi
\right) _{\Omega _{b}} + \beta \left( \left( \lbrack \mathbf{u}_{t}-\mathbf{v}%
]\cdot \mathbf{\btau},\mathbf{u}_{t}\cdot \mathbf{\btau}\right), \phi \right)
_{\Gamma _{I}} \\ 
& +\left( \left( k \nabla p,\nabla p\right), \phi \right) _{\Omega
_{b}}-\left( \left( \mathbf{v}\cdot \mathbf{e}_{3},p\right), \phi \right)
_{\Gamma _{I}}\text{.}%
\label{semi_2}
\end{align}
Considering the right hand side of the above relation, the definition of weak
solutions in Definition \ref{weaksols}, and \eqref{higher} we can infer the well-definition of the function%
\begin{equation}
t\mapsto \left\langle  \sigma ^{E}(\mathbf{u}(t))\mathbf{e}_{3},%
\mathbf{u}_{t}(t)\right\rangle \in L^{1}(0,T).
\label{semi_3}
\end{equation}

At this point, we collect the following facts:
\begin{itemize}
\item[(s.i)]  $\bu \in L^2(0,T;\mathbf U)$;~ $\bu_t \in L^2(0,T;\mathbf L^2(\Omega_b))$
\item[(s.ii)] $\rho_b\bu_{tt} -\nabla \cdot \sigma^E(\bu)  =-\alpha \nabla p   \in L^2(0,T;\mathbf L^2(\Omega_b))$
\item[(s.iii)] $\left\langle  \sigma ^{E}(\mathbf{u})\mathbf{e}_{3},\mathbf{u}%
_{t}\right\rangle \in L^{1}(0,T)$ (as noted in \ref{semi_3})).
\end{itemize}
In consequence of the above items, we proceed analogously as in \cite{temam} for second order (in time) evolutions---see the proof of \cite[Theorem 4.1, p.77]{temam1}. Given (s.i)--(s.iii), we will have that, after an readjustment $a.e.$ in time, the displacement component of the weak solution, $\bu$, satisfies the following (invoking \cite[Lemma 4.1, p.78]{temam1}):
\begin{itemize}
\item  $\bu \in C([0,T];\mathbf U) \cap C^1([0,T];\mathbf L^2(\Omega_b))$ 
\item the  identity  (where the time derivative on the RHS is interpreted in the sense of $\ds \mathscr D'(0,T)$): \begin{equation} \label{A}\ds \big(\rho_b\bu_{tt} -\nabla \cdot \sigma^E(\bu),\mathbf u_t\big)_{\mathbf L^2(\Omega_b)} - \left\langle  \sigma ^{E}(\mathbf{u})\mathbf{e}_{3},\mathbf{u}%
_{t}\right\rangle= \dfrac{1}{2}\dfrac{d}{dt}\left[\rho_b||\bu_t||_{\mathbf L^2(\Omega_b)}^2+||\bu||_E^2\right].\end{equation} 
\end{itemize}
We may now use the identity \eqref{A}. 

Since test function $\phi$ in \eqref{semi} is arbitrary, we obtain the following relation for the weak solution:
\begin{align}
\left(\left\langle  \sigma ^{E}(\mathbf{u})\mathbf{e}_{3},\mathbf{u}%
_{t}\right\rangle  \right) _{\Gamma _{I}}=&~ \left( \left( \rho _{b}\mathbf{%
u}_{tt}-\nabla \cdot \sigma ^{E}(\mathbf{u}),\mathbf{u}_{t}\right) 
\right) _{\Omega _{b}} + \beta \left( \left( \lbrack \mathbf{u}_{t}-\mathbf{v}%
]\cdot \mathbf{\btau},\mathbf{u}_{t}\cdot \mathbf{\btau}\right)  \right)
_{\Gamma _{I}} \\ 
&+\left( \left( k \nabla p,\nabla p\right) \phi \right) _{\Omega
_{b}}-\left( \left( \mathbf{v}\cdot \mathbf{e}_{3},p\right)  \right)
_{\Gamma _{I}}\text{.}%
\label{semi_5}
\end{align}
Invoking \eqref{A} gives now,
\begin{equation}\label{dualone}
 - ((k\nabla   p ,\nabla p))_{\Omega_b} +  (({ {\vec{v}} \cdot\vec{e}_3,p}))_{\Gamma_I}= \frac{\rho_b}{2}||\bu_t(T)||_{\Omega_b}^2+\frac{1}{2}||\bu(T)||_E^2+\beta(([\bu_t-\bv]\cdot \btau,\bu_t\cdot \btau)), 
\end{equation}

Finally, we repeat our  procedure (in a more standard way) for the Stokes velocity variable in the weak form. We take $\bxi=\mathbf 0$ and $q=0$ in \eqref{weakformforu} to obtain:
\begin{align}\label{onetouse}
    - \rho_f(( {\bv} ,\bzeta_t))_{\Omega_f} +&~ 2\nu((\vec{D}( {\vec{v}} ),\vec{D}(\bzeta)))_{\Omega_f} + (({  p ,\bzeta\cdot\vec{e}_3}))_{\Gamma_I}+ \beta (({[ {\vec{v}}- {\bu}_t] \cdot\btau,\bzeta \cdot\btau}))_{\Gamma_I}=0.
\end{align}

We may invoke the trace theorem for $\bv\in L^2(0,T; \vec{H}^1_{\#,\ast}(\Omega_f) \cap \vec{V})$ and $p \in L^2(0,T; H^1_{\#,*}(\Omega_b))$, as well as the regularity for $\bu_t\cdot \btau \in L^2(0,T;L^2(\Gamma_I))$ to estimate the term $ \rho_f(( {\bv} ,\bzeta_t))_{\Omega_f}$. Noting that $\bzeta \in C_0^1([0,T); \mathbf H^1_{\#,*}(\Omega_f) \cap \bV)$, we obtain immediately from \eqref{onetouse} that distributional derivative
\begin{equation}
 {\bv}_t \in L^2(0,T; [\mathbf H^1_{\#,*}(\Omega_f) \cap \bV]').
\end{equation}
From which we have, by the Intermediate Derivatives Theorem, that $ \mathbf{v} \in C([0,T]; \mathbf{L}^2(\Omega_f))$ and, as is standard \cite{evans} (in the sense of $\mathscr D'(0,T)$),
\begin{equation}
\dfrac{1}{2}\dfrac{d}{dt}||\bv||^2_{\mathbf L^2(\Omega_f)} = \langle \bv_t,\bv\rangle,
\end{equation}
where the RHS represents the appropriate duality pairing,
taking into account $\mathbf H^1_{\#,*}(\Omega_f) \cap \bV \hookrightarrow \mathbf L^2(\Omega_f) \hookrightarrow [\mathbf H^1_{\#,*}(\Omega_f) \cap \bV]'$. Moreover, by density, equation \eqref{onetouse} holds with $\bzeta = \mathbf{v}$. Thus, for \eqref{onetouse} above, we can reverse integration by parts in time and test with $\bv$ to obtain the relation:
\begin{align}\label{onetouse*}
\dfrac{\rho_f}{2}||\bv(T)||^2+&~ 2\nu((\vec{D}( {\vec{v}} ),\vec{D}(\bzeta)))_{\Omega_f} + (({  p ,\bv\cdot\vec{e}_3}))_{\Gamma_I}+ \beta (({[ {\vec{v}}- {\bu}_t] \cdot\btau,\bv \cdot\btau}))_{\Gamma_I}=0.
\end{align}

Finally, we consider the \eqref{dualone} and \eqref{onetouse} in conjunction:
\begin{align}\small
\frac{\rho_b}{2}||\bu_t(T)||_{\Omega_b}^2+\frac{1}{2}||\bu(T)||_E^2+k||\nabla p||^2_{L^2(0,T;\mathbf L^2(\Omega_b))} -   (({ {\vec{v}} \cdot\vec{e}_3,p}))_{\Gamma_I} +\beta(([\bu_t-\bv]\cdot \btau,\bu_t\cdot \btau))=&~0 \\ \small
\dfrac{\rho_f}{2}||\bv(T)||^2+~ 2\nu((\vec{D}( {\vec{v}} ),\vec{D}(\bzeta)))_{\Omega_f} + (({  p ,\bv\cdot\vec{e}_3}))_{\Gamma_I}+ \beta (({[ {\vec{v}}- {\bu}_t] \cdot\btau,\bv \cdot\btau}))_{\Gamma_I}=&~0.
\end{align}
Adding the equalities, we have obtained the energy identity for this weak solution (with zero RHS and zero Cauchy data). Moreover, since we can replace $T$ in the energy identity with $0<t<T$, we conclude that 
$[\bu,\bu_t,p,\bv]^T=[\mathbf 0, \mathbf 0, 0,\mathbf 0]^T$ point-wise $a.e.$ in spacetime. From this, uniqueness of weak solutions follows for the case $c_0=0$. 

\end{proof}

\subsection{Proof for $c_0>0$ Case: Ball's Method}\label{ballproof}
In this section we will invoke an approach due to Ball \cite{ball} for the present Hilbert space setting. The abstract notion of semigroup weak solution requires test functions coming from $\mathcal D(\mathcal A^*)$, where $\mathcal A^*$ is the Hilbert space adjoint of the Biot-Stokes semigroup generator $\mathcal A \subset X \to X$ defined in \eqref{diffaction} for $c_0>0$. In this work, we will need an explicit representation of the adjoint operator $\mathcal A^*$. The computation of the adjoint, and, in particular, the characterization of its domain $\mathcal D(\mathcal A^*)$, constitutes the considerable contribution here, as the complexity of the given Biot-Stokes PDE dynamics render these highly nontrivial tasks. The entirety of the Appendix  is devoted to this endeavor. With the adjoint well in hand, we can reconcile Ball's abstract notion of weak solution---valid for any semigroup generator---with our specific notion of weak solution in Definition \ref{weaksols}. To wit, we will find that a weak solution in the sense of the latter is a weak solution in the sense of the former. Given the uniqueness of weak solutions derived from the semigroup generator, we will then establish that weak solutions in the sense of Definition \ref{weaksols} are also unique, through identification. 

We begin by stating the characterization of $\mathcal A^*$. The proof of this Proposition is relegated to the Appendix, as it is both technical and protracted. 
\begin{proposition}\label{thistheadjoint}
Consider $\mathcal A$ as in \eqref{diffaction}, with $\mathcal D(\mathcal A)$ as defined in Definition \ref{diffdomain}. Then the Hilbert space adjoint $\mathcal A^*: \mathcal D(\mathcal A^*) \subset X \to X$ is given by: 
\begin{equation}\label{diffaction*}
   \mathcal A^*\mathbf{\widetilde y}\equiv  \begin{pmatrix}
       - \widetilde{\vec{w}}\\
        \mathcal{E}_0{\widetilde{\vec{u}}} + \alpha \rho_b^{-1}\nabla \widetilde p\\
        \alpha c_0^{-1}\nabla\cdot \widetilde{\vec{w}} -A_0\widetilde p\\
        \rho_f^{-1}\nu \Delta \widetilde{\vec{v}} - \rho_f^{-1}\nabla \widetilde\pi\end{pmatrix}
    \in X, \quad \widetilde {\mathbf y} = [\widetilde{\bu}, \widetilde{\bw}, \widetilde p, \widetilde{\bv}]^T \in \mathcal D(\mathcal A^*),
\end{equation}
where we denote $\widetilde \pi = \widetilde \pi(p,\vec{v})$ (given through the Green's maps as in Section \ref{operator}) as the solution to 
\begin{equation}\label{pi*}
    \begin{cases}
        \Delta \widetilde \pi = 0 &\in L^2(\Omega_f),\\
        \partial_{\vec{e}_3}\widetilde \pi = \nu \Delta \widetilde{\vec{v}} \cdot\vec{e}_3& \in H^{-3/2}(\Gamma_f),\\
       \widetilde \pi = \mathbf e_3\cdot[\sigma_b(\widetilde{\bu},\widetilde p)] + 2\nu\vec{e}_3\cdot \vec{D}(\widetilde{\vec{v}})\vec{e}_3  &\in H^{-1/2}(\Gamma_I),
    \end{cases}
\end{equation}
with periodic Dirichlet boundary conditions---appropriately defined---on $\Gamma_{lat}$. 

The Sobolev regularity of elements in $\mathcal D(\mathcal A^*)$ is identical to those in Definition \ref{diffdomain}, and the associated boundary conditions for $[\widetilde{\bu}, \widetilde{\bw}, \widetilde p, \widetilde{\bv}]^T \in \mathcal D(\mathcal A^*)$ on $\Gamma_I$ are:
\begin{itemize}
\item $\sigma_f(\widetilde{\bv},\widetilde \pi)\mathbf e_3=-\sigma_b(\widetilde{\bu},\widetilde p)\mathbf e_3$,
\item $-[\sigma_f(\widetilde{\bv},\widetilde \pi)\mathbf e_3]\cdot \btau=\beta [\widetilde{\bv}-\widetilde{\bw}]\cdot \btau$,
\item $\widetilde p = [\sigma_f(\widetilde{\bv},\widetilde \pi)\mathbf e_3]\cdot\mathbf e_3$,
\item $k\nabla \widetilde p\cdot \mathbf e_3 = [\widetilde{\bv}-\widetilde{\bw}]\cdot \mathbf e_3$,
\end{itemize}
interpreted in the appropriate sense (as in Definition \ref{diffdomain}).
\end{proposition}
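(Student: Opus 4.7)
The plan is to derive $\mathcal{A}^*$ from the defining relation $(\mathcal{A}\mathbf{y},\widetilde{\mathbf{y}})_X = (\mathbf{y},\mathcal{A}^*\widetilde{\mathbf{y}})_X$ for $\mathbf{y}\in\mathcal{D}(\mathcal{A})$, via systematic integration by parts. A key feature that makes this tractable is the weighted inner product in \eqref{innerX}: the weights $\rho_b,c_0,\rho_f$ cancel precisely against the reciprocals $\rho_b^{-1},c_0^{-1},\rho_f^{-1}$ appearing in \eqref{diffaction}, exposing the natural symmetric structure. The first step would be to expand $(\mathcal{A}\mathbf{y},\widetilde{\mathbf{y}})_X$ using \eqref{innerX} and \eqref{diffaction}, obtaining interior pairings of $\mathbf{w},\ \nabla\cdot\sigma^E(\mathbf{u}),\ \nabla p,\ \nabla\cdot\mathbf{w},\ k\Delta p,\ \nu\Delta\mathbf{v}-\nabla\pi$ against the components of $\widetilde{\mathbf{y}}$, and then shift all derivatives onto $\widetilde{\mathbf{y}}$.

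The integration by parts produces boundary integrals on $\partial\Omega_b$ and $\partial\Omega_f$. On $\Gamma_b,\Gamma_f$ and the lateral faces, the boundary conditions of $\mathbf{y}\in\mathcal{D}(\mathcal{A})$ from Definition \ref{diffdomain} together with lateral periodicity immediately reduce the contributing boundary to $\Gamma_I$ (modulo analogous trace-vanishing requirements for $\widetilde{\mathbf{y}}$, which we will read off at the end). The Stokes pressure term is the most delicate piece: rather than treating $\pi$ as an independent unknown, I would substitute its Green's-map representation and use \eqref{pi} to rewrite $(\nabla\pi,\widetilde{\mathbf{v}})_{\Omega_f}$ through Green's identity, so that its Neumann data $\nu\Delta\mathbf{v}\cdot\mathbf{e}_3$ on $\Gamma_f$ combines with the boundary contribution from $\nu(\Delta\mathbf{v},\widetilde{\mathbf{v}})$, and its Dirichlet data $p+2\nu\mathbf{e}_3\cdot\mathbf{D}(\mathbf{v})\mathbf{e}_3$ on $\Gamma_I$ couples cleanly with the interface stress terms already present. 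Collecting interior terms then yields the candidate action \eqref{diffaction*}; the sign flips in the first three components reflect the antisymmetric coupling between displacement/velocity and the Biot--Willis coupling, while the dissipative $A_0\widetilde{p}$ and $\nu\Delta\widetilde{\mathbf{v}}$ retain their signs.

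The main work is then to impose that the remaining $\Gamma_I$ boundary residual vanishes for all $\mathbf{y}\in\mathcal{D}(\mathcal{A})$, and to extract from this the four interface conditions listed in the proposition. The obstacle here is that the interface conditions in Definition \ref{diffdomain} couple the traces of $\mathbf{u}$, $\mathbf{w}$, $p$, $\mathbf{v}$ (through $\sigma_b\mathbf{e}_3=\sigma_f\mathbf{e}_3$, the BJS slip, and the normal pressure-stress matching), so traces cannot simply be varied one at a time. My strategy is to construct carefully chosen elements of $\mathcal{D}(\mathcal{A})$ that localize the trace variation: e.g., solutions built from the Green's maps with prescribed normal component of $\mathbf{w}\cdot\mathbf{e}_3$ on $\Gamma_I$ and compatible values of the remaining traces. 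By pairing such test elements against $\widetilde{\mathbf{y}}$ and decomposing into normal and tangential components, each of the four conditions (on $\sigma_f(\widetilde{\mathbf{v}},\widetilde{\pi})\mathbf{e}_3$, the BJS-type slip for $\widetilde{\mathbf{v}}-\widetilde{\mathbf{w}}$, $\widetilde{p}$, and $k\nabla\widetilde{p}\cdot\mathbf{e}_3$) emerges. The extra $\mathbf{e}_3\cdot\sigma_b(\widetilde{\mathbf{u}},\widetilde{p})\mathbf{e}_3$ contribution to the Dirichlet data of $\widetilde{\pi}$ in \eqref{pi*} (which is absent from the forward $\pi$) should fall out naturally from combining the stress-matching condition on $\mathbf{y}$ with the adjoint sign flip.

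Finally, the reverse inclusion must be verified: any $\widetilde{\mathbf{y}}$ satisfying the stated regularity and boundary conditions is shown to lie in $\mathcal{D}(\mathcal{A}^*)$ by reversing the integration-by-parts calculation and exhibiting $\mathbf{y}\mapsto(\mathcal{A}\mathbf{y},\widetilde{\mathbf{y}})_X$ as a continuous linear functional on $X$, with the continuous extension equal to $(\mathbf{y},\mathcal{A}^*\widetilde{\mathbf{y}})_X$. The two hardest aspects of the argument will be (i) performing the integration by parts rigorously despite the low trace regularity of elements of $\mathcal{D}(\mathcal{A})$ (which requires interpreting several pairings as dualities between $H^{-1/2}_\#$ and $H^{1/2}$ traces, as in Definition \ref{diffdomain}), and (ii) carefully bookkeeping the signs introduced by $\mathbf{n}_b=-\mathbf{n}_f=-\mathbf{e}_3$ across $\Gamma_I$, since a single misplaced sign would propagate through every interface condition.
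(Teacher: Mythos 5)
Your plan for the easy direction is essentially what the paper does: expand $(\mathcal{A}\mathbf{y},\widetilde{\mathbf{y}})_X$ using the weighted inner product \eqref{innerX}, integrate by parts so all derivatives land on $\widetilde{\mathbf{y}}$, handle the Stokes pressure by invoking its Green's-map BVP \eqref{pi} (adding and subtracting an auxiliary $\widetilde\pi$), and verify that when $\widetilde{\mathbf{y}}$ satisfies the claimed regularity and interface conditions (call this set $\mathcal{S}$), all $\Gamma_I$ boundary terms cancel, giving $(\mathcal{A}\mathbf{y},\widetilde{\mathbf{y}})_X=(\mathbf{y},\mathcal{L}\widetilde{\mathbf{y}})_X$ with $\mathcal{L}$ the claimed action \eqref{diffaction*}. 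Your observations about the sign flips, the $\rho_b^{-1},c_0^{-1},\rho_f^{-1}$ cancellations, and the extra $\mathbf{e}_3\cdot\sigma_b(\widetilde{\mathbf{u}},\widetilde p)\mathbf{e}_3$ contribution to the Dirichlet data of $\widetilde\pi$ in \eqref{pi*} are all consistent with the paper. This establishes $\mathcal{S}\subseteq\mathcal{D}(\mathcal{A}^*)$ and $\mathcal{A}^*|_{\mathcal{S}}=\mathcal{L}$.

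The genuine gap is in your strategy for the harder inclusion $\mathcal{D}(\mathcal{A}^*)\subseteq\mathcal{S}$. You propose to extract the four interface conditions on $\widetilde{\mathbf{y}}$ by constructing "carefully chosen elements of $\mathcal{D}(\mathcal{A})$ that localize the trace variation," but you yourself note that the interface conditions in Definition \ref{diffdomain} couple the traces of $\mathbf{u},\mathbf{w},p,\mathbf{v}$ — and there is no reason to expect that the constrained set $\mathcal{D}(\mathcal{A})$ admits a trace map that is surjective onto the relevant product of interface trace spaces. Without such surjectivity (which is nowhere established, and would be a nontrivial result in its own right), you cannot vary the traces independently, and the argument stalls. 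The paper sidesteps this entirely via a resolvent argument. After proving $\mathcal{A}^*|_{\mathcal{S}}=\mathcal{L}$, it proves a separate lemma that $0\in\rho(\mathcal{L})$ (by posing and solving a mixed variational problem via Babu\v{s}ka--Brezzi, mirroring the maximality step in the generation proof of \cite{oldpaper}; trace surjectivity is invoked here, but for unconstrained test functions in $H^1_{\#,*}(\Omega_b)$ and $\mathbf{H}^1_{\#,*}(\Omega_f)$, where it is available). Then for arbitrary $\mathbf{x}\in\mathcal{D}(\mathcal{A}^*)$, setting $\widetilde{\mathbf{y}}=\mathcal{L}^{-1}\mathcal{A}^*\mathbf{x}\in\mathcal{S}$, one checks $(\mathcal{A}\mathbf{y},\mathbf{x}-\widetilde{\mathbf{y}})_X=0$ for all $\mathbf{y}\in\mathcal{D}(\mathcal{A})$; since $\mathcal{A}$ is also boundedly invertible (so $\mathcal{A}\mathcal{D}(\mathcal{A})=X$), this forces $\mathbf{x}=\widetilde{\mathbf{y}}\in\mathcal{S}$. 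You need to replace your trace-localization step with this kind of maximality argument; as written, your proposal does not establish the domain equality.
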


Since $\mathcal A$ generates a $C_0$-semigroup of contractions on the Hilbert space $X$ via the Lumer-Phillips Theorem, we know immediately that $\mathcal A^*$, with domain given by $\mathcal D(\mathcal A^*)$, generates a $C_0$-semigroup of contractions on $X$ as well \cite[Ch.1, Corollary 10.6]{pazy}. 
\begin{proof}[Proof of Theorem \ref{th:main} with $c_0>0$] We recall the variational relation \eqref{weakform}, for weak solutions with $c_0 > 0$, 
    \begin{align}\label{weakformforu*}
        &- \rho_b((\vec{u}_t,\bxi_t))_{\Omega_b} + ((\sigma_b(\vec{u},p), \nabla \bxi))_{\Omega_b} - ((c_0p+\alpha\nabla\cdot\vec{u}, \partial_t q))_{\Omega_b} + ((k\nabla p,\nabla q))_{\Omega_b} \nn\\
        &- \rho_f((\vec{v},\bzeta_t))_{\Omega_f} + 2\nu((\vec{D}(\vec{v}),\vec{D}(\bzeta)))_{\Omega_f} + (({p,(\bzeta-\bxi)\cdot\vec{e}_3}))_{\Gamma_I} - (({\vec{v}\cdot\vec{e}_3,q}))_{\Gamma_I} \nn\\
        &- (({\vec{u}\cdot\vec{e}_3,\partial_tq}))_{\Gamma_I} + \beta (({\vec{v}\cdot\btau,(\bzeta - \bxi)\cdot\btau}))_{\Gamma_I} + \beta(({\vec{u}\cdot\btau, (\bzeta_t - \bxi_t)\cdot\btau}))_{\Gamma_I}\nn \\
        =&~ \rho_b(\mathbf u_1,\bxi)_{\Omega_b}\big|_{t=0} + (d_0,q_b)_{\Omega_b}\big|_{t=0} + \rho_f(\vec{v}_0,\bzeta)_{\Omega_f}\big|_{t=0} - ({\vec{u}_0\cdot\vec{e}_3,q_b})_{\Gamma_I}\big|_{t=0} \nn\\
        &~+ \beta({\vec{u}_0\cdot\btau, (\bzeta - \bxi)\cdot\btau})_{\Gamma_I}\big|_{t=0} + ((\vec{F}_b,\bxi))_{\Omega_b} + (\langle S,q_b\rangle)_{\Omega_b} + (\langle \vec{F}_f,\bzeta\rangle)_{\Omega_f},
    \end{align}
       for every test function $[\bxi,q,\bzeta]^T \in \mathcal{V}_\text{test}$. Now, choosing test functions 
       \[
[\bxi,\bzeta]^T \in  \mathcal{D}((0,T); \vec{U}   \times (\vec{H}^1_{\#,\ast}(\Omega_f)\cap \vec{V}))  ~~\text{and}~q=0,
\]
and using the density thereof in $L^2(0,T;\vec{U}  \times (\vec{H}^1_{\#,\ast}(\Omega_f)\cap \vec{V}))$ -- as well as considering criterion (2) of weak solution Definition \ref{weaksols} -- we obtain from \eqref{weakformforu*} that
\begin{equation}
 {\bu}_{tt} \in L^2(0,T; \mathbf U'),~~\text{and}~~ {\bv}_t \in L^2(0,T; [\mathbf H^1_{\#,*}(\Omega_f) \cap \bV]').
\end{equation}

Thus, similar to the previous case of $c_{0}=0$, we then have from the Intermediate
Derivatives Theorem that 
\begin{equation}
\mathbf{u}\in C([0,T];\mathbf{U})\cap C^{1}([0,T];\mathbf{L}^{2}(\Omega
_{b}))\text{ and }\mathbf{v}\in C([0,T]; \mathbf L^2(\Omega_f)).  \label{high_2}
\end{equation}

However, the treatment of the terms
$$ ((c_0p+\alpha\nabla\cdot\vec{u}, \partial_t q))_{\Omega_b},~~\text{ and }~~(({\vec{u}\cdot\vec{e}_3,\partial_tq}))_{\Gamma_I} $$ is necessarily different and more subtle than the $c_0=0$ case.  
Taking test functions $q\in C_{0}^{1}([0,T);H_{\#,\ast }^{1}(\Omega _{b}))$ and  $\bzeta=\bxi=\mathbf 0$ in \eqref{weakformforu*} we obtain,
\begin{align*}
   -  ((c_0p+\alpha \nabla\cdot {\vec{u}} , \partial_t q))_{\Omega_b} & 
    - (( {\vec{u}} \cdot\vec{e}_3,\partial_tq))_{\Gamma_I} =   - ((k\nabla   p ,\nabla q))_{\Omega_b} +  (({ {\vec{v}} \cdot\vec{e}_3,q}))_{\Gamma_I}. 
\end{align*}    
Concerning the second term on left hand side: we employ the divergence theorem to rewrite it, after considering the zero Cauchy data for the Biot-Stokes solution and zero data at $t=T$ for the test function $q$:
\begin{align*}
(( {\vec{u}} \cdot\vec{e}_3,\partial_tq))_{\Gamma_I} =&~ ((\nabla \cdot \bu, q_t))_{\Omega_b}+((\bu , \nabla q_t))_{\Omega_b} \\
=&~((\nabla \cdot \bu, q_t))_{\Omega_b}-((\bu_t, \nabla q))_{\Omega_b}
\end{align*}
So the weak form for the pressure component can be rewritten as
\begin{align*}
   -  ((c_0p+(\alpha+1) \nabla\cdot {\vec{u}} , \partial_t q))_{\Omega_b} & 
=  - ((\bu_t, \nabla q))_{\Omega_b}- ((k\nabla   p ,\nabla q))_{\Omega_b} +  (({ {\vec{v}} \cdot\vec{e}_3,q}))_{\Gamma_I} 
\end{align*}    
for all $q \in C_0^{1}([0,T);H^1_{\#,*}(\Omega_b))$.
This shows, upon estimating the RHS directly via the available bounds from the definition of weak solution in \eqref{weakform}, and subsequently invoking density, that
$$\partial_t[c_0p+(\alpha+1) \nabla\cdot {\vec{u}} ] \in L^2(0,T;[H^1_{\#,*}(\Omega_b)]').$$
Moreover, since $p \in L^2(0,T;H^1_{\#,*}(\Omega_b))$ and $\nabla \cdot \bu \in C([0,T];L^2(\Omega_b))$ (from \eqref{high_2}), we again obtain from the Intermediate Derivatives Theorem that (conservatively) 
$$c_0p+(\alpha+1) \nabla\cdot {\vec{u}} \in C([0,T];[H^1_{\#,*}(\Omega_b)]'),$$
which yields
\begin{equation}
p\in C([0,T];[H_{\#,\ast }^{1}(\Omega _{b})]^{\prime }).  \label{p_r}
\end{equation}

Now, the goal is to move to a static test function version of the weak form \eqref{weakformforu*}, in order to reconcile our weak solution with that of the semigroup weak solution in \cite{ball}. To this end, we take the test function in relation \eqref{weakformforu*} to be $[\phi \bxi, \phi q, \phi \bzeta]^T$, where (static) $[\bxi, q, \bzeta]^T \in \vec{U} \times  H^1_{\#,*}(\Omega_b) \times (\vec{H}^1_{\#,\ast}(\Omega_f)\cap \vec{V})$ and $\phi \in \mathcal{D}(0,T)$. Integrating by parts and noting $\phi$ vanishes at $t=0,T$, we then have
    \begin{align}
        & \rho_b(\langle\vec{u}_{tt},\phi \bxi\rangle)_{\Omega_b} + ((\sigma^E(\vec{u}), \phi \nabla \bxi))_{\Omega_b}-\alpha((p,\phi \nabla \cdot \bxi))_{\Omega_b} + (\langle [c_0p+\alpha\nabla\cdot\vec{u}]_t,  \phi q\rangle )_{\Omega_b} + \left( \left( \mathbf{u}_{t}\cdot \mathbf{e}_{3},\phi q\right) \right)
_{\Gamma _{I}}
 \nn\\
        &+ ((k\nabla p, \phi \nabla q))_{\Omega_b} +  \rho_f(\langle \vec{v}_t,  \phi \bzeta\rangle)_{\Omega_f} + 2\nu((\vec{D}(\vec{v}),\vec{D}( \phi \bzeta)))_{\Omega_f} + (({p, \phi (\bzeta-\bxi)\cdot\vec{e}_3}))_{\Gamma_I} - (({\vec{v}\cdot\vec{e}_3, \phi q}))_{\Gamma_I} \nn\\
        & + \beta (({\vec{v}\cdot\btau, \phi (\bzeta - \bxi)\cdot\btau}))_{\Gamma_I} - \beta(( {\vec{u}_t\cdot\btau, \phi (\bzeta - \bxi)\cdot\btau}))_{\Gamma_I}\nn \\
        &= ((\vec{F}_b,\phi \bxi))_{\Omega_b} + (\langle S,\phi q\rangle)_{\Omega_b} + (\langle \vec{F}_f,\phi \bzeta\rangle)_{\Omega_f}.
    \end{align}

   That is, in $\mathscr{D}'(0,T)$, we have the following relation for every (static) $[\bxi, q, \bzeta]^T \in \vec{U} \times  H^1_{\#,*}(\Omega_b) \times (\vec{H}^1_{\#,\ast}(\Omega_f)\cap \vec{V})$: 
       \begin{align}\label{tointegrate}
        & \dfrac{d}{dt}\left\{ \rho_b(\vec{u}_{t},\bxi)_{\Omega_b}+ \rho_f( \vec{v},\bzeta)_{\Omega_f}  +( [c_0p+\alpha\nabla\cdot\vec{u}],  q )_{\Omega_b} + ({\vec{u}\cdot\vec{e}_3,q})_{\Gamma_I}\right\} \nn\\
        & + (k\nabla p,\nabla q)_{\Omega_b} + (\sigma^E(\vec{u}), \nabla \bxi)_{\Omega_b}-\alpha(p,\nabla \cdot \bxi)_{\Omega_b}  + 2\nu(\vec{D}(\vec{v}),\vec{D}(\bzeta))_{\Omega_f}  \nn\\
        &+ ({p,(\bzeta-\bxi)\cdot\vec{e}_3})_{\Gamma_I} - ({\vec{v}\cdot\vec{e}_3,q})_{\Gamma_I}+ \beta ([{\vec{v}-\bu_t]\cdot\btau,(\bzeta - \bxi)\cdot\btau})_{\Gamma_I} \nn \\
       &=(\vec{F}_b,\bxi)_{\Omega_b} + ( S,q)_{\Omega_b} + ( \vec{F}_f, \bzeta)_{\Omega_f} .
    \end{align}
We note that each term in this relation \eqref{tointegrate} is well-defined as a distribution,
given the regularity of weak solutions, as specified in Definition \ref{weaksols}. In particular, 
\begin{equation}
\mathbf{u}_{t}\in \left[ \mathcal{D}(0,T;H^{\frac{1}{2}}(\Gamma _{I})\right]
^{\prime }\text{ }\implies \text{ }\left( \mathbf{u}_{t}\cdot \mathbf{e}%
_{3},g\right) _{\Gamma _{I}}\in \mathcal{D}^{\prime }(0,T)\text{ for any }%
g\in H^{\frac{1}{2}}(\Gamma _{I}).  \label{trace}
\end{equation}

\smallskip

 With respect to the relation \eqref{tointegrate}, we now specify the test function to be $[\widetilde{\bu},\widetilde{\bw},\widetilde p, \widetilde{\bv}]^T \in \mathcal D(\mathcal A^*)$. (Since $[\bxi, q, \bzeta]^T \in \vec{U} \times  H^1_{\#,*}(\Omega_b) \times (\vec{H}^1_{\#,\ast}(\Omega_f)\cap \vec{V})$, by the definition of $\mathcal D(\mathcal A^*)$ in Proposition \ref{thistheadjoint} we may indeed take $\widetilde{\bw} = \bxi,$ $\widetilde p = q$, and $\widetilde{\bv}=\bzeta$ in \eqref{tointegrate}.) In $\mathscr{D}'(0,T)$, we thus have, for given $[\widetilde{\bu},\widetilde{\bw},\widetilde p, \widetilde{\bv}]^T \in \mathcal D(\mathcal A^*)$,
\begin{align}
& \dfrac{d}{dt}\left\{ \rho _{b}(\mathbf{u}_{t},\widetilde{\mathbf{w}}%
)_{\Omega _{b}}+(c_{0}p,\widetilde{p})_{\Omega _{b}}+\rho _{f}(\mathbf{v},%
\widetilde{\mathbf{v}})_{\Omega _{f}}\right\} \nn  \nonumber \\
& = -(\sigma ^{E}(\vec{u}%
),\nabla \widetilde{\mathbf{w}})_{\Omega _{b}}-\alpha (\nabla \cdot \mathbf{u%
}_{t},\widetilde{p})_{\Omega _{b}}+\alpha (p,\nabla \cdot \widetilde{\mathbf{%
w}})_{\Omega _{b}}-(k\nabla p,\nabla \widetilde p)_{\Omega _{b}}-2\nu (\mathbf{D}(%
\mathbf{v}),\mathbf{D}(\widetilde{\mathbf{v}}))_{\Omega _{f}}\nn  \nonumber
\\
& -\beta ([\mathbf{v}-\mathbf{u}_{t}]\cdot \mathbf{\btau ,}[\widetilde{%
\mathbf{v}}-\widetilde{\mathbf{w}}]\cdot \mathbf{\btau )}_{\Gamma _{I}}+([%
\mathbf{v}-\mathbf{u}_{t}]\cdot \mathbf{e}_{3},\widetilde{p})_{\Gamma _{I}}-(%
{p,[\widetilde{\mathbf{v}}-\widetilde{\mathbf{w}}]\cdot \mathbf{e}}%
_{3})_{\Gamma _{I}}\nn  \nonumber \\
& +(\vec{F}_{b},\widetilde{\mathbf{w}})_{\Omega _{b}}+(S,\widetilde p)_{\Omega _{b}}+(\vec{F}%
_{f},\widetilde{\bv})_{\Omega _{f}}.  \label{8}
\end{align}

By way of simplifying the right hand side of \eqref{8}, we will now invoke the definition of $\mathcal D(\mathcal A^*)$. 
First, we have from the definition of $\mathcal A^*$ in Proposition \ref{thistheadjoint} the tangential slip condition and the stress-matching condition, invoked below:
\begin{align}\label{9}
 - \beta ([{\vec{v}-\bu_t]\cdot\btau,(\widetilde{\bv} - \widetilde{\bw})\cdot\btau})_{\Gamma_I} = & ~  ([{\vec{v}-\bu_t]\cdot\btau, [\sigma_f(\widetilde{\bv},\widetilde{\pi})\mathbf e_3]\cdot \btau})_{\Gamma_I}  \nn \\
 =& ~  ({\vec{v}\cdot\btau, [\sigma_f(\widetilde{\bv},\widetilde{\pi})\mathbf e_3]\cdot \btau})_{\Gamma_I }  \nn \\
 &~+  (\bu_t\cdot\btau, [\sigma_b(\widetilde{\bu},\widetilde{p})\mathbf e_3]\cdot \btau)_{\Gamma_I}. 
\end{align}

Secondly, we have from integrating by parts and the normal matching BC on $\Gamma_I$ in $\mathcal D(\mathcal A^*)$
\begin{align}\label{10}
 - (k\nabla p,\nabla \widetilde p)_{\Omega_b}- ({p,(\widetilde{\bv} - \widetilde{\bw})\cdot\vec{e}_3}))_{\Gamma_I}= & ~ - (k\nabla p,\nabla \widetilde p)_{\Omega_b}- ({p,k\nabla \widetilde p\cdot\vec{e}_3}))_{\Gamma_I}  \nn \\
 =& ~ c_0(p,kc_0^{-1}\Delta \widetilde p)_{\Omega_b}.
\end{align}

Lastly, again invoking the stress-matching BC on $\Gamma_I$ in $\mathcal D(\mathcal A^*)$:
\begin{align}\label{11}
\langle [{\vec{v}-\bu_t]\cdot\vec{e}_3,\widetilde p}\rangle_{\Gamma_I}= & ~\langle [{\vec{v}-\bu_t]\cdot\vec{e}_3,[\sigma_f(\widetilde{\bv},\widetilde{\pi})\mathbf e_3]\cdot\mathbf e_3 }\rangle_{\Gamma_I} \nn \\
= & ~\langle {\vec{v}\cdot\vec{e}_3,[\sigma_f(\widetilde{\bv},\widetilde{\pi})\mathbf e_3]\cdot\mathbf e_3 }\rangle_{\Gamma_I} + \langle {\vec{u}_t\cdot\vec{e}_3,[\sigma_b(\widetilde{\bu},\widetilde{p})\mathbf e_3]\cdot\mathbf e_3 }\rangle_{\Gamma_I} 
\end{align}
(Note that since $-[\sigma_b(\widetilde{\bu},\widetilde p)\mathbf e_3]\cdot \btau=\beta [\widetilde{\bv}-\widetilde{\bw}]\cdot \btau \in H^{1/2}(\Gamma_I)$,
then the second term on RHS of \eqref{11} is well-defined, via \eqref{trace}.

We rewrite the RHS of \eqref{8}, using the  the identities \eqref{9}--\eqref{11}:
                   \begin{align}\label{todus}
         \dfrac{d}{dt}\Big\{\rho_b(\vec{u}_{t},\widetilde{\bw})_{\Omega_b}+& \rho_f( \vec{v}(t),\widetilde{\bv})_{\Omega_f} + c_0(p(t), \widetilde p)_{L^2(\Omega_b)}\Big\}  \nn\\
        =&~\alpha(\vec{u}_t(t),  \nabla \widetilde p )_{\Omega_b} + \alpha \langle \bu_t(t)\cdot \mathbf e_3, \widetilde p \rangle_{\Gamma_I}  \nn \\ 
        & +c_0 (c_0^{-1}k p,\Delta \widetilde p)_{\Omega_b} - (\sigma^E(\vec{u}), \nabla \widetilde{\bw})_{\Omega_b}+\alpha(p,\nabla \cdot \widetilde{\bw})_{\Omega_b}  - 2\nu(\vec{D}(\vec{v}),\vec{D}(\widetilde{\bv}))_{\Omega_f}  \nn\\
        & + ({\vec{v}, [\sigma_f(\widetilde{\bv},\widetilde{\pi})\mathbf e_3]})_{\Gamma_I}+ (\bu_t, [\sigma_b(\widetilde{\bu},\widetilde{p})\mathbf e_3)_{\Gamma_I}  \nn \\
& +(\vec{F}_{b},\widetilde{\mathbf{w}})_{\Omega _{b}}+(S,\widetilde p)_{\Omega _{b}}+(\vec{F}%
_{f},\widetilde{\bv})_{\Omega _{f}}.  
    \end{align}

At this point, we note that for given $\left[ \widetilde{\mathbf{u}},%
\widetilde{\mathbf{w}},\widetilde{p},\widetilde{\mathbf{v}}\right] \in 
\mathcal{D}(\mathcal{A}^{\ast })$, one has the following point-wise
distributional relation with respect to Biot-Stokes solution component $%
\mathbf{u}_{t}$:%
\begin{equation}
-\left( \sigma ^{E}(\mathbf{u}_{t}),\nabla \widetilde{\mathbf{u}}\right)
_{\Omega _{b}}-\left\langle \mathbf{u}_{t},\sigma ^{E}(\widetilde{\mathbf{u}}%
)\cdot \mathbf{e}_{e}\right\rangle _{\Gamma _{I}}=\left( \mathbf{u}%
_{t},\nabla \cdot \sigma ^{E}\widetilde{\mathbf{u}}\right) _{\Omega _{b}}%
\text{ in }\mathscr{D}^{\prime }(0,T).  \label{green}
\end{equation}
With this relation in mind we add and subtract by $\left( \sigma ^{E}(\mathbf{u}_{t}),\nabla \widetilde{\mathbf{u}}\right)
_{\Omega _{b}}$, with respect to the right hand side of \eqref{todus}: This gives,

\begin{align}
\dfrac{d}{dt}\Big\{\rho _{b}(\vec{u}_{t},\widetilde{\bw})_{\Omega _{b}}+&
\rho _{f}(\vec{v}(t),\widetilde{\bv})_{\Omega _{f}}+c_{0}(p(t),\widetilde{p}%
)_{\Omega _{b}}\Big\}\nn  \notag \\
& =-(\sigma ^{E}(\vec{u}_{t}),\nabla \widetilde{\mathbf{u}})_{\Omega
_{b}}-\rho _{b}\left( \widetilde{\mathbf{u}}_{t},\rho _{b}^{-1}\nabla \cdot
\sigma ^{E}\widetilde{\mathbf{u}}\right) _{\Omega _{b}}- \cancel{\left\langle \mathbf{%
u}_{t},\sigma ^{E}(\widetilde{\mathbf{u}})\cdot \mathbf{e}_{3}\right\rangle
_{\Gamma _{I}} } -(\sigma ^{E}(\vec{u}),\nabla \widetilde{\bw})_{\Omega_{b}} 
\notag \\
& +\alpha \rho_{b}(\vec{u}_{t},\rho_{b}^{-1}\nabla \widetilde{p})_{\Omega_{b}}+ \cancel{ \alpha \langle \bu_{t}(t)\cdot \mathbf{e}_{3},\widetilde{p}\rangle
_{\Gamma _{I}} }+c_{0}(p,\alpha c_{0}^{-1}\nabla \cdot \widetilde{\bw}%
)_{\Omega_{b}}\nn  \notag \\
& +c_{0}(p,c_{0}^{-1}k\Delta \widetilde{p})_{\Omega_{b}}+\rho _{f}(\vec{v}%
,\nu \rho _{f}^{-1}\Delta (\widetilde{\bv}))_{\Omega _{f}}-\left( \mathbf{v},\nabla \widetilde{\pi }\right) _{\Omega _{f}} +\cancel{ \langle {\vec{u}%
_{t},\sigma_{b}(\widetilde{\bu},\widetilde{p})\mathbf{e}_{3}}\rangle
_{\Gamma _{I}} } \nn  \notag \\
& +(\vec{F}_{b},\widetilde{\mathbf{w}})_{\Omega _{b}}+(S,\widetilde{p}%
)_{\Omega_{b}}+(\vec{F}_{f},\widetilde{\bv})_{\Omega _{f}}.  \label{todus*}
\end{align}

In other words, if 
\begin{equation}
\mathbb{V}(t)\equiv \left[ 
\begin{array}{c}
\mathbf{u}(t) \\ 
\mathbf{u}_{t}(t) \\ 
p(t) \\ 
\mathbf{v}(t)%
\end{array}%
\right] \text{, \ }\mathbf{\Psi }\equiv \left[ 
\begin{array}{c}
\widetilde{\mathbf{u}} \\ 
\widetilde{\mathbf{w}} \\ 
\widetilde{p} \\ 
\widetilde{\mathbf{v}}%
\end{array}%
\right] \text{, and } ~~~\mathbf{F}\equiv \left[ 
\begin{array}{c}
0 \\ 
\vec{F}_{b} \\ 
S \\ 
\vec{F}_{f}%
\end{array}%
\right],  \label{dyn}
\end{equation}%
then from (\ref{todus*}) and the definition of $\mathcal{A}^{\ast }$ in Proposition \ref{thistheadjoint},
we have the point-wise a.e. $t$  relation,%
\begin{equation}
\frac{d}{dt}\left( \mathbb{V}(t),\mathbf{\Psi }\right) _{X}=\left( \mathbb{V}%
(t),\mathcal{A}^{\ast }\mathbf{\Psi }\right) _{X}+\left( \mathbf{F}(t),%
\mathbf{\Psi }\right) _{X}\text{, \ for every }\mathbf{\Psi }\in \mathcal D(\mathcal{A%
}^{\ast }).  \label{abs}
\end{equation}

Thus, for $\mathbb{V}(t)=\left[ \mathbf{u},\mathbf{u}_{t},p,\mathbf{v}\right]^T$, we have that
\begin{equation}
\begin{array}{l}
\mathbb{V}(t) \in C([0,T];\mathbf{U})\times C([0,T];\mathbf{L}^{2}(\Omega
_{b}))\times C([0,T];[H_{\#,\ast }^{1}(\Omega _{b})]^{\prime })\times
C([0,T]; \mathbf L^2(\Omega_f))%
\end{array}
\label{C_X}
\end{equation}%
satisfies the point-wise relation \eqref{abs} and is \textit{a fortiori} in $C([0,T];[\mathcal D(\mathcal{A}^{\ast })]^{\prime })$. If we now take
arbitrary $\bold\Psi $ in \eqref{abs} to be in $\mathcal D([\mathcal{A}^{\ast }]^{2})$, then by the standard semigroup theory \cite{pazy}, the  density of $\mathcal D([\mathcal A^*]^2)$ in $\mathcal D(\mathcal A^*)$ yields
that for a.e. $t$, the weak solution $\mathbb{V}(t)$ satisfies the Cauchy
problem,%
\begin{eqnarray*}
\frac{d}{dt}\mathbb{V}(t) &=&\mathcal{A}\mathbb{V}(t)+\mathbf{F}(t) \in 
[\mathcal D([\mathcal{A}^{\ast }]^{2})]^{\prime } \\
\mathbb{V}(0) &=&\left[ 
\begin{array}{c}
\mathbf{u}(0) \\ 
\mathbf{u}_{t}(0) \\ 
p(0) \\ 
\mathbf{v}(0)%
\end{array}%
\right] \in X\subset \lbrack D(\mathcal{A}^{\ast })]^{\prime }.
\end{eqnarray*}%
As such, $\mathbb{V}(t)$ is necessarily given explicitly via the agency of already the established
Biot-Stokes semigroup $\left\{ e^{\mathcal{A}t}\right\} _{t\geq 0}$ in \cite[Theorem 1]{oldpaper}. That is, invoking the variation of parameters representation of the solution \cite{pazy},
\begin{equation}
\mathbb{V}(t)=e^{\mathcal{A}t}\mathbb{V}(0)+\int_{0}^{t}e^{\mathcal{A}(t-s)}%
\mathbf{F}(t)ds,  \label{char}
\end{equation}
\noindent with the given right hand side being in $C([0,T];X)$, not just $C([0,T];[D(%
\mathcal{A}^{\ast })]^{\prime })$. And so, in particular, we establish that the Biot-Stokes
weak solution component $p$ is indeed in $C([0,T];L^{2}(\Omega _{b}))$. The
characterization of the weak solution (\ref{char}) through the variation of parameters formula then finishes the proof of the case $c_{0}>0$. Namely, weak solutions in the sense of Definition \ref{weaksols} must coincide with semigroup solutions as established in \cite[Theorem 1 and Corollary 2.2]{oldpaper}; uniqueness of semigroup solutions may then be invoked, and so this concludes the proof of the main result here, Theorem \ref{th:main}. 
\end{proof}

\section*{Appendix: Adjoint Calculation}\label{adjoint}
In this section we prove the characterization of the adjoint of the semigroup generator $\mathcal A$ presented in Section \ref{operator}. Specifically, this section serves to prove Proposition \ref{thistheadjoint}. 

We begin by defining a set $\mathcal S$ which carries regularity information and interfacial boundary conditions, and will be used to ascertain $\mathcal D(A^*)$. This set has the same regularity as $\mathcal D(A)$ for the first nine bullets of Definition \ref{diffdomain}, but its interface conditions on $\Gamma_I$ vary from that of $D(A)$ by several sign changes. 
\begin{definition} \label{sdomain} Let $\vec{y}\in X$. Then $\vec{y} = [\vec{u},\vec{w},p,\vec{v}]^T \in \mathcal S$ if and only if the following bullets hold:
\begin{itemize}
    \item $\vec{u} \in \vec{U}$  with  $\mathcal{E}_0(\vec{u}) \in \vec{L}^2(\Omega_b)$ (so that $[{\sigma_b(\vec{u})\mathbf n}]\big|_{\partial\Omega_b} \in \vec{H}^{-1/2}(\partial \Omega_b)$);
    \item $\vec{w} \in \vec{U}$;
    
    \item $p \in  H_\#^1(\Omega_b)$ with $A_0p \in L^2(\Omega_b)$ (so that $\left.\partial_{\mathbf n}p\right|_{\partial\Omega_b} \in {H}^{-1/2}(\partial \Omega_b)$);
    
        \item $[{\sigma_b(\vec{u})\mathbf n}] \in \vec{H}_{\#}^{-1/2}(\partial \Omega_b)$ (then  
       $\sigma^E(\bu)\mathbf n\cdot \mathbf n \in H^{-1/2}_{\#}(\partial \Omega_b)$);
       \item  $\partial_{\mathbf n} p \in H^{-1/2}_{\#}(\partial \Omega_b)$
    
    \item $\vec{v} \in \vec{H}_\#^1(\Omega_f) \cap \vec{V}$ with $\vec{v}|_{\Gamma_f} = \vec{0}$;
    \item There exists $\pi \in L^2(\Omega_f)$ such that $$\nu\Delta \vec{v} - \nabla\pi \in \vec{V},$$ where $\pi=\pi(p,\bv)$ is as in \eqref{pi*} (and so
 $\left.\sigma_f\mathbf(\bv,\pi) n\right|_{\partial \Omega_f}\in \vec{H}^{-\frac{1}{2}}(\partial\Omega_f)$ and  $\left.\pi\right|_{\partial \Omega_f} \in H^{-\frac{1}{2}}(\partial\Omega_f)$;
   
    \item $2\nu \mathbf D(\bv) \big|_{\partial \Omega_f} \in \mathbf H^{-1/2}_{\#}(\partial \Omega_f)$ and ~$\pi \big|_{\partial\Omega_f} \in H^{-1/2}_{\#}(\partial \Omega_f)$;
    
    \item  $\left.\Delta \vec{v}\cdot\mathbf n\right|_{\partial \Omega_f} \in H^{-3/2}(\partial\Omega_f)$);
    
    \item  $\restri{(\vec{v} - \vec{w})\cdot\vec{e}_3} = \restri{k\nabla p\cdot\vec{e}_3} \in H^{-1/2}(\Gamma_I)$;
    
    \item $\restri{\beta(\vec{v}-\vec{w})\cdot\btau} = \restri{-\btau\cdot\sigma_f\vec{e}_3} \in H^{-1/2}(\Gamma_I)$;
    
    \item $\restri{\vec{e}_3 \cdot \sigma_f\vec{e}_3} = \restri{p} \in H^{-1/2}(\Gamma_I)$;

    \item $-\restri{\sigma_b\vec{e}_3} = \restri{\sigma_f\vec{e}_3} \in \vec{H}^{-1/2}(\Gamma_I)$.
\end{itemize}
\end{definition}
Let us also define an operator $\mathcal L$ on $\mathcal D(\mathcal A)$, which will later be identified with $\mathcal A^*$. This operator is characterized by the adjoint action of $\mathcal A$:
\begin{equation}\label{lop}
(\mathcal A \by, \byt)_{X} = (\by,\mathcal L\byt)_X,~~\forall ~\by, ~\byt \in \mathcal D(\mathcal A).
\end{equation}

Now, we prove two Lemmas:
\begin{lemma}
The set $\mathcal S$ as in Definition \ref{sdomain} is a subset of $\mathcal D(\mathcal A^*)$.
\end{lemma}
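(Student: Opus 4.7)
The plan is to show that for every $\widetilde{\mathbf y} \in \mathcal S$ the linear functional $\mathbf y \mapsto (\mathcal A \mathbf y, \widetilde{\mathbf y})_X$ is bounded on $\mathcal D(\mathcal A)$, and in fact equals $(\mathbf y, \mathcal L \widetilde{\mathbf y})_X$ where $\mathcal L$ is the candidate adjoint from \eqref{diffaction*}. This is exactly the defining property: $\widetilde{\mathbf y} \in \mathcal D(\mathcal A^*)$ with $\mathcal A^* \widetilde{\mathbf y} = \mathcal L \widetilde{\mathbf y}$. The first step is to verify that $\mathcal L \widetilde{\mathbf y} \in X$. The four components of $\mathcal L\widetilde{\mathbf y}$ are well-defined in the appropriate spaces precisely because Definition~\ref{sdomain} imposes $\mathcal E_0 \widetilde{\mathbf u} \in \mathbf L^2(\Omega_b)$, $\nabla \widetilde p \in \mathbf L^2(\Omega_b)$, $\nabla\cdot \widetilde{\mathbf w} \in L^2(\Omega_b)$, $A_0 \widetilde p \in L^2(\Omega_b)$, and $\nu\Delta \widetilde{\mathbf v} - \nabla\widetilde\pi \in \mathbf V$, with $\widetilde\pi$ solving \eqref{pi*}.

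Next I would fix an arbitrary $\mathbf y = [\mathbf u, \mathbf w, p, \mathbf v]^T \in \mathcal D(\mathcal A)$, expand $(\mathcal A \mathbf y, \widetilde{\mathbf y})_X$ via the inner product \eqref{innerX}, and carry out Green's identities block by block to transfer derivatives from $\mathbf y$ to $\widetilde{\mathbf y}$. The elasticity block produces $-a_E(\mathbf u,\widetilde{\mathbf w}) + a_E(\widetilde{\mathbf u},\mathbf w)$ together with boundary pairings of $\sigma^E(\mathbf u)\mathbf n$ against $\widetilde{\mathbf w}$ and $\sigma^E(\widetilde{\mathbf u})\mathbf n$ against $\mathbf w$; the Biot--Darcy block uses Green's formula twice to convert $(k\Delta p,\widetilde p)_{\Omega_b}$ into $(p, k\Delta \widetilde p)_{\Omega_b}$ plus the conormal residues $k\langle \partial_{\mathbf n}p, \widetilde p\rangle - k\langle p, \partial_{\mathbf n}\widetilde p\rangle$, while the $\alpha$-coupling terms pair with their transposes on both the interior and the boundary. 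In the Stokes block, divergence-freeness of both $\mathbf v$ and $\widetilde{\mathbf v}$ together with the Dirichlet condition on $\Gamma_f$ reduces $\nu(\Delta \mathbf v, \widetilde{\mathbf v})_{\Omega_f} - (\nabla\pi,\widetilde{\mathbf v})_{\Omega_f}$ to the symmetric quantity $\nu(\mathbf v, \Delta \widetilde{\mathbf v})_{\Omega_f} - (\mathbf v, \nabla\widetilde\pi)_{\Omega_f}$ modulo boundary pairings of $\sigma_f(\mathbf v,\pi)\mathbf e_3$ and $\sigma_f(\widetilde{\mathbf v},\widetilde\pi)\mathbf e_3$ on $\Gamma_I$.

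The crux is verifying that the accumulated boundary residues vanish. Contributions on $\Gamma_b$ and $\Gamma_f$ vanish from the homogeneous Dirichlet conditions built into $\mathbf U$ and $\mathbf V$; contributions on $\Gamma_{lat}$ cancel in opposing pairs by the periodic identification encoded in $H^s_\#$. On $\Gamma_I$ one is left with pairings among the traces $\sigma_b(\mathbf u,p)\mathbf e_3$, $k\nabla p\cdot\mathbf e_3$, $\sigma_f(\mathbf v,\pi)\mathbf e_3$, $p$, $\mathbf w$ (and their tilded counterparts). Here one substitutes, on the one hand, the four interface conditions for $\mathbf y \in \mathcal D(\mathcal A)$ from Definition \ref{diffdomain} and, on the other, the four interface conditions for $\widetilde{\mathbf y} \in \mathcal S$ from Definition \ref{sdomain}. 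The sign flips present in the latter are precisely engineered so that the BJS tangential pairing, the Darcy normal-flux pairing, the scalar pressure/normal-stress pairing, and the full stress-matching pairing all cancel pairwise. In addition, the Stokes pressure boundary identities $\pi\big|_{\Gamma_I} = p + 2\nu\mathbf e_3\cdot\mathbf D(\mathbf v)\mathbf e_3$ from \eqref{pi} and $\widetilde\pi\big|_{\Gamma_I} = \mathbf e_3\cdot\sigma_b(\widetilde{\mathbf u},\widetilde p)\mathbf e_3 + 2\nu\mathbf e_3\cdot \mathbf D(\widetilde{\mathbf v})\mathbf e_3$ from \eqref{pi*} must be inserted to reconcile the $\langle \pi, \widetilde{\mathbf v}\cdot\mathbf e_3\rangle_{\Gamma_I}$ and $\langle \widetilde\pi, \mathbf v\cdot\mathbf e_3\rangle_{\Gamma_I}$ residues with the $\langle p, \cdot\rangle$ and $\langle \widetilde p, \cdot\rangle$ Biot residues.

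Once the $\Gamma_I$ boundary sum is shown to collapse to zero, the remaining interior terms rearrange to exactly $(\mathbf y, \mathcal L\widetilde{\mathbf y})_X$. Hence $|(\mathcal A \mathbf y, \widetilde{\mathbf y})_X| \le \|\mathbf y\|_X \, \|\mathcal L\widetilde{\mathbf y}\|_X$, and since $\mathcal D(\mathcal A)$ is dense in $X$, the functional extends, so $\widetilde{\mathbf y} \in \mathcal D(\mathcal A^*)$ with $\mathcal A^*\widetilde{\mathbf y} = \mathcal L\widetilde{\mathbf y}$. The main obstacle I anticipate is the careful bookkeeping of the $\Gamma_I$ duality pairings: several of the traces exist only in negative-order spaces such as $H^{-1/2}(\Gamma_I)$ or $H^{-3/2}(\Gamma_I)$, so each Green's identity must be interpreted as a duality bracket between suitable trace and co-trace spaces, and the substitution of the elliptic identities for $\pi$ and $\widetilde\pi$ must be done in those same spaces. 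Apart from these delicate compatibility checks, the argument is an orchestrated sequence of integration-by-parts identities, with the sign convention of $\mathcal S$ ensuring that every interface residue has a matching partner.
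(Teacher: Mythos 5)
Your proposal is correct and follows essentially the same route as the paper: compute $(\mathcal A\mathbf y,\widetilde{\mathbf y})_X$ for $\mathbf y\in\mathcal D(\mathcal A)$, $\widetilde{\mathbf y}\in\mathcal S$, transfer derivatives block by block via Green's identities, introduce $\widetilde\pi$ through the trace identities from \eqref{pi} and \eqref{pi*}, and show the $\Gamma_I$ residues cancel pairwise using the opposed interface signs of Definition~\ref{sdomain} against those of Definition~\ref{diffdomain}, yielding $(\mathcal A\mathbf y,\widetilde{\mathbf y})_X=(\mathbf y,\mathcal L\widetilde{\mathbf y})_X$. One small remark: once you have exhibited the representing vector $\mathcal L\widetilde{\mathbf y}\in X$, that identity \emph{is} the defining criterion for $\widetilde{\mathbf y}\in\mathcal D(\mathcal A^*)$, so the final appeal to boundedness and density is superfluous (though harmless).
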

\begin{proof}
Let $\by = [\bu, \bw, p, \bv]^T \in \mathcal D(\mathcal A)$ and $\byt = [\widetilde{\bu}, \widetilde{\bw}, \widetilde p, \widetilde{\bv}]^T \in \mathcal S$. Then we compute $(\mathcal A\by,\byt)_X$:
\begin{align}
(\mathcal A\by,\byt)_X = & ~ (\sigma^E(\bw), \mathbf D(\widetilde{\bu}))_{\Omega_b} +\rho_b\rho_b^{-1}(\text{div}~\sigma^E(\bu),\widetilde{\bw})_{\Omega_b}-\alpha \rho_b\rho_b^{-1}(\nabla p, \widetilde{\bw})_{\Omega_b}\\ \nonumber
& -\alpha c_0c_0^{-1}(\nabla \cdot \bw, \widetilde p)_{\Omega_b}+c_0c_0^{-1}k(\Delta p, \widetilde p)_{\Omega_b} \\ \nonumber
& + \rho_f\rho_f^{-1}\nu(\Delta \bv, \widetilde{\bv})_{\Omega_f} - \rho_f\rho_f^{-1}(\nabla \pi, \widetilde{\bv})_{\Omega_f},
\end{align}
where $\pi$ is as in \eqref{pi} .  We proceed with the calculation, invoking integration by parts (generalized Green's theorem):
\begin{align}
(\mathcal A\by,\byt)_X = & ~ (\bw,-\text{div}~\sigma^E(\widetilde{\bu}))_{\Omega_b}+\langle \bw, \sigma^E(\widetilde{\bu})  \mathbf  n \rangle_{\Gamma_I}- (\sigma^E(\bu), \mathbf D(\widetilde{\bw}))_{\Omega_b}+\langle \sigma^E(\bu)\mathbf n, \widetilde{\bw}\rangle_{\Gamma_I} \\ \nonumber
&-\alpha\langle p\mathbf n , \widetilde{\bw} \rangle_{\Gamma_I}+\alpha (p , \nabla \cdot \widetilde{\bw})_{\Omega_b} - \alpha \langle \bw, \widetilde p \mathbf n\rangle_{\Gamma_I} + \alpha (\bw, \nabla \widetilde p)_{\Omega_b} + k \langle \nabla p\cdot \mathbf n, \widetilde p\rangle_{\Gamma_I}\\ \nonumber
& -k(\nabla p, \nabla \widetilde p)_{\Omega_b}-\nu(\nabla \bv, \nabla \widetilde{\bv})_{\Omega_f}-\nu\langle [\nabla \bv] \mathbf n ,  \widetilde{\bv}\rangle_{\Gamma_I}+\langle \pi \mathbf n , \widetilde{\bv}\rangle_{\Gamma_I} .
\end{align}
Above, the normal vector $\mathbf n=-\mathbf e_3$ is taken with respect to  $\Omega_f$; of course, we have also invoked lateral periodicity on $\Gamma_{\text{lat}}$, as well as Dirichlet conditions $\bu \equiv 0$ and $p \equiv 0$ on $\Gamma_b$ and $\bv = 0$ on $\Gamma_f$. Applying the divergence theorem and Green's theorem once more, we arrive at the central identity for this lemma:
\begin{align} \label{ayy}
(\mathcal A\by,\byt)_X = & -(\sigma^E(\bu),\mathbf D(\widetilde{\bw}))_{\Omega_b} +\rho_b(\bw,-\rho_b^{-1}\text{div} ~\sigma^E(\widetilde{\bu}))_{\Omega_b} +c_0(p,\alpha c_0^{-1}\nabla\cdot\widetilde{\bw})_{\Omega_b}
\\ 
&+\alpha\rho_b(\bw,\rho_b^{-1}\nabla \widetilde p)_{\Omega_b} +c_0(p,c_0^{-1}k\Delta \widetilde p)_{\Omega_b} +\rho_f(\bv, \rho_f^{-1}\nu\Delta \widetilde{\bv})_{\Omega_f} + B.T.
\nonumber\\
\text{with}~~B.T.~ \equiv & -\alpha \langle \bw  , \widetilde p \mathbf n\rangle_{\Gamma_I} - \langle  p, k\nabla \widetilde p \cdot \mathbf n \rangle_{\Gamma_I}+\nu \langle \bv , [\nabla \widetilde{\bv}] \mathbf n \rangle_{\Gamma_I}+\langle \bw , \sigma^E(\widetilde{\bu})\mathbf n \rangle_{\Gamma_I} \\ 
\nonumber
& +\langle \sigma^E(\bu)\mathbf n, \widetilde{\bw}\rangle_{\Gamma_I}-\alpha \langle p\mathbf n , \widetilde{\bw} \rangle_{\Gamma_I} + k\langle \nabla p \cdot \mathbf n , \widetilde p \rangle_{\Gamma_I}- \nu\langle [\nabla  \bv]\mathbf n , \widetilde{\bv} \rangle_{\Gamma_I}+\langle \pi \mathbf n, \widetilde{\bv}\rangle_{\Gamma_I}.
\end{align}
We proceed to address the pressure, $\widetilde{\pi}$, as associated to $\byt \in \mathcal S$. This is to say, there is a $\widetilde{\pi} \in L^2(\Omega_f)$ so that
$$\nu\Delta \widetilde{\bv}-\nabla \widetilde{\pi} \in \bf V.$$ Thence, we may rewrite $B.T.$ in terms of $\widetilde \pi$: 
\begin{align}
B.T. = & ~ \langle \bw, \sigma_b(\widetilde{\bu},\widetilde p)\mathbf n \rangle_{\Gamma_I} - \langle p , k\nabla \widetilde p \cdot \mathbf n \rangle_{\Gamma_I} +\langle \sigma_b(\bu,p)\mathbf n, \widetilde{\bw}\rangle_{\Gamma_I}+k\langle \nabla p\cdot\mathbf n, \widetilde p\rangle_{\Gamma_I} \\ \nonumber
& -\langle \sigma_f(\bv,\pi)\mathbf n, \widetilde{\bv}\rangle_{\Gamma_I}+\nu\langle \bv, [\nabla \widetilde{\bv}]\mathbf n\rangle_{\Gamma_I} - \langle \bv, \widetilde \pi \mathbf n\rangle_{\Gamma_I}+ \langle \bv, \widetilde \pi \mathbf n\rangle_{\Gamma_I}, \\ \nonumber
 \text{ (where we have} &  \text{ added and subtracted the last terms involving $\widetilde{\pi}$) } \\ \label{rewriz}
= & ~ -\langle \bw, \sigma_b(\widetilde{\bu},\widetilde p)\mathbf e_3 \rangle_{\Gamma_I} +\langle p , k\nabla \widetilde p \cdot \mathbf e_3 \rangle_{\Gamma_I} -\langle \sigma_b(\bu,p)\mathbf e_3, \widetilde{\bw}\rangle_{\Gamma_I} \\ \nonumber
&-k\langle \nabla p\cdot\mathbf e_3, \widetilde p\rangle_{\Gamma_I}+\langle \sigma_f(\bv,\pi)\mathbf e_3, \widetilde{\bv}\rangle_{\Gamma_I} -\langle \bv, \sigma_f(\widetilde{\bv},\widetilde \pi)\mathbf e_3\rangle_{\Gamma_I}-(\bv, \nabla \widetilde \pi)_{\Omega_f}.
\end{align}
Now, with the definition of $B.T.$ above, and recalling the interface conditions on $\Gamma_I$ associated to $\mathcal D(\mathcal A)$ in Definition 2, we can rewrite \eqref{rewriz} as:
\begin{align} \label{bount}
B.T. +(\bv, \nabla \widetilde \pi)_{\Omega_f}= &~ -\langle \bw\cdot \mathbf e_3, [\sigma_b(\widetilde{\bu},\widetilde p)\mathbf e_3]\cdot \mathbf e_3\rangle_{\Gamma_I} -\langle \bw\cdot \btau, [\sigma_b(\widetilde{\bu},\widetilde p)\mathbf e_3]\cdot \mathbf \btau\rangle_{\Gamma_I} \\ \nn
& -\langle [\sigma_f(\bv,\pi)\mathbf e_3]\cdot \mathbf e_3, k\nabla \widetilde p\cdot \mathbf e_3 \rangle_{\Gamma_I}  -\langle [\sigma_f(\mathbf v, \pi)\mathbf e_3]\cdot \mathbf e_3, \widetilde{\bw}\cdot \mathbf e_3 \rangle_{\Gamma_I} \\ \nn
& - \langle [\sigma_f(\mathbf v, \pi)\mathbf e_3]\cdot \mathbf \btau, \widetilde{\bw}\cdot \btau \rangle_{\Gamma_I} +\langle [\bv-\bw]\cdot \mathbf e_3, \widetilde p \rangle_{\Gamma_I} +\langle[\sigma_f(\bv,\pi)\mathbf e_3]\cdot \mathbf e_3 , \widetilde{\bv}\cdot \mathbf e_3 \rangle_{\Gamma_I}\\ \nn
&+\langle [\sigma_f(\bv,\pi)\mathbf e_3]\cdot \btau , \widetilde{\bv}\cdot \btau\rangle_{\Gamma_I} - \langle \bv\cdot \mathbf e_3, [\sigma_f(\widetilde{\bv},\widetilde \pi)\mathbf e_3]\cdot \mathbf e_3\rangle_{\Gamma_I} - \langle \bv \cdot \btau, [\sigma_f(\widetilde{\bv},\widetilde \pi)\mathbf e_3]\cdot\btau\rangle_{\Gamma_I}.
\end{align}
We now invoke the boundary conditions on $\Gamma_I$ in in Definition 2 (for $\mathcal{D}(\mathcal{A})$
) and Definition \ref{sdomain} (for $\mathcal S$,  with respect to $\by$ and $\byt$. We begin with the tangential terms in \eqref{bount}:
\begin{align}
 &- \langle \bw\cdot \btau, [\sigma_b(\widetilde{\bu},\widetilde p)\mathbf e_3 ]\cdot \mathbf \btau\rangle_{\Gamma_I} 
   - \langle [\sigma_f(\mathbf v, \pi)\mathbf e_3]\cdot \mathbf \btau, \widetilde{\bw}\cdot \btau \rangle_{\Gamma_I}   \\\nonumber
& +\langle [\sigma_f(\bv,\pi)\mathbf e_3]\cdot \btau , \widetilde{\bv}\cdot \btau\rangle_{\Gamma_I}   - \langle \bv \cdot \btau, [\sigma_f(\widetilde{\bv},\widetilde \pi)\mathbf e_3]\cdot\btau\rangle_{\Gamma_I}\\ \nn
=&-\left\langle [\mathbf{v}-\mathbf{w}]\cdot \mathbf{\tau },[\sigma _{f}(%
\widetilde{\mathbf{v}},\widetilde{\mathbf{w}})\mathbf{e}_{3}]\cdot \mathbf{%
\tau }\right\rangle _{\Gamma _{I}}+\left\langle [\sigma _{f}(\mathbf{v},%
\mathbf{w})\mathbf{e}_{3}]\cdot \mathbf{\tau },[\widetilde{\mathbf{v}}-%
\widetilde{\mathbf{w}}]\cdot \mathbf{\tau }\right\rangle _{\Gamma _{I}} \\ \nn
= & ~ -\beta \langle [\bv-\bw]\cdot \btau, [\widetilde{\bv}-\widetilde{\bw}]\cdot \btau \rangle_{\Gamma_I} + \beta \langle  [\bv-\bw]\cdot \btau, [\widetilde{\bv} -\widetilde{\bw}]\cdot\btau \rangle_{\Gamma_I} \\\nonumber
=& ~0.
\end{align}
Proceeding similarly, we have:
\begin{align}\nonumber
 &- \langle \bw\cdot \mathbf e_3, [\sigma_b(\widetilde{\bu},\widetilde p)\mathbf e_3]\cdot \mathbf e_3 \rangle_{\Gamma_I} - \langle \bv \cdot \mathbf e_3 , [\sigma_f(\widetilde{\bv},\widetilde \pi)\mathbf e_3]\cdot \mathbf e_3 \rangle_{\Gamma_I} + \langle [\bv -\bw]\cdot \mathbf e_3, \widetilde p \rangle_{\Gamma_I} \\
=&~ \langle \mathbf w \cdot \mathbf e_3 , [\sigma_f(\widetilde{\bv},\widetilde \pi)\mathbf e_3]\cdot \mathbf e_3 \rangle_{\Gamma_I} - \langle \bv\cdot \mathbf e_3, [\sigma_f(\widetilde{\bv},\widetilde \pi)\mathbf e_3]\cdot \mathbf e_3 \rangle_{\Gamma_I} +\langle [\bv -\bw]\cdot \mathbf e_3 , [\sigma_f(\widetilde{\bv},\widetilde \pi)\mathbf e_3]\cdot \mathbf e_3 \rangle_{\Gamma_I} \\\nonumber
=& ~0; \nonumber \\ 
\text{and} & \nonumber \\ \nonumber
& - \langle [\sigma_f(\bv, \pi)\mathbf e_3]\cdot \mathbf e_3 , k\nabla \widetilde p\cdot \mathbf e_3\rangle_{\Gamma_I} - \langle [\sigma_f(\bv,\pi)\mathbf e_3]\cdot \mathbf e_3, \widetilde{\bw}\cdot \mathbf e_3\rangle_{\Gamma_I}+ \langle [\sigma_f(\bv, \pi)\mathbf e_3]\cdot \mathbf e_3 , \widetilde{\bv}\cdot \mathbf e_3\rangle_{\Gamma_I} \\ \nonumber
=& ~ - \langle [\sigma_f(\bv, \pi)\mathbf e_3]\cdot \mathbf e_3 , [\widetilde{\bv}-\widetilde{\bw}]\cdot \mathbf e_3\rangle_{\Gamma_I} - \langle [\sigma_f(\bv,\pi)\mathbf e_3]\cdot \mathbf e_3, \widetilde{\bw}\cdot \mathbf e_3\rangle_{\Gamma_I}+ \langle [\sigma_f(\bv, \pi)\mathbf e_3]\cdot \mathbf e_3 , \widetilde{\bv}\cdot \mathbf e_3\rangle_{\Gamma_I} \nonumber \\
=&~ 0.
\end{align}
Applying the calculations (3.54)-(3.56) to the right hand side of \eqref{bount}, we then shown
\begin{equation}\label{Bt} B.T. +(\bv, \nabla \widetilde{\pi})_{\Omega_f} = 0.\end{equation}
In turn, applying \eqref{Bt} to \eqref{ayy}, we conclude: ~$\forall~\byt \in S$ and $y \in \mathcal D(\mathcal A)$,
$$(\mathcal A \by, \byt)_X = (\by, \mathcal L\byt)_X,$$
with $\mathcal L$ having action
\begin{equation}\label{ell}
\mathcal L \byt = \begin{bmatrix} 
- \widetilde{\bw} \\ \mathcal E_0\widetilde{\bu}+ \alpha \rho_b^{-1}\nabla \widetilde p \\ \alpha c_0^{-1}\nabla \cdot \widetilde{\bw}-A_0p \\ \rho_f^{-1}\nu\Delta \widetilde{\bv}-\rho_f^{-1}\nabla \widetilde \pi
\end{bmatrix}
\end{equation} (after recalling \eqref{ops}). In this case, the pressue $\widetilde \pi$ is identified by the boundary value problem
\begin{equation} \label{pis3} \begin{cases}
\Delta \widetilde \pi = 0 & \text{ in } ~\Omega_f \\
\partial_{\mathbf n} \widetilde \pi = \nu \Delta \widetilde{\bv}\cdot \mathbf n & \text{ on }~\Gamma_f \\
\widetilde \pi = [\sigma_b(\widetilde \bu, \widetilde p)\mathbf e_3]\cdot \mathbf e_3+[\nu\nabla \bv\cdot \mathbf e_3]\cdot \mathbf e_3 & \text{ on }~\Gamma_I \\
\widetilde \pi~~\text{ is (Dirichlet) periodic on }~\Gamma_{\text{lat}}.
\end{cases}\end{equation}
\end{proof}
We have now shown that $\mathcal S \subseteq \mathcal D(\mathcal A^*)$ and that $$\mathcal A^*\big|_{\mathcal S} = \mathcal L.$$ We will finish the characterization of $\mathcal A^*$ in Proposition \ref{thistheadjoint} if we can show that $\mathcal D(\mathcal A^*) \subseteq \mathcal S.$ We now proceed with an intermediate result.
\begin{lemma} \label{resolvent}
For operator $\mathcal L$ given by \eqref{ell} with $\mathcal D(\mathcal L) =\mathcal S$, we have that 
$$0 \in \rho(\mathcal L);$$
i.e., $\mathcal L$ is boundedly invertible on $X$. 
\end{lemma}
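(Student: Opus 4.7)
The plan is to solve, for arbitrary $\mathbf f = [\mathbf f_1, \mathbf f_2, f_3, \mathbf f_4]^T \in X$, the static problem $\mathcal L \widetilde{\mathbf y} = \mathbf f$ and then deduce bounded invertibility via the resulting a priori estimate. The first component yields $\widetilde{\bw} = -\mathbf f_1 \in \mathbf U$ directly, reducing the task to a coupled static Lam\'e--Poisson--Stokes system for $(\widetilde{\bu}, \widetilde p, \widetilde{\bv})$, with right-hand sides depending on $\mathbf f_2$, $f_3$, $\nabla\cdot\mathbf f_1$, and $\mathbf f_4$. The interface conditions on $\Gamma_I$ are those of Definition \ref{sdomain} (the sign-flipped variants of $\mathcal D(\mathcal A)$), and the Stokes pressure $\widetilde\pi$ is eliminated through the Green's map formalism of Section \ref{operator}, mirroring the treatment of $\pi$ for the forward generator.

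I would pose this reduced problem variationally on $W = \mathbf U \times H^1_{\#,*}(\Omega_b) \times (\mathbf H^1_{\#,*}(\Omega_f) \cap \mathbf V)$: multiply each equation against a test function from the corresponding component of $W$, integrate by parts in the interior, and enforce the transmission conditions on $\Gamma_I$ to obtain a continuous bilinear form $a:W\times W \to \mathbb{R}$ together with a continuous linear functional on $W$ built from the data. Testing with the solution itself, and integrating by parts on $\alpha(\widetilde p, \nabla\cdot\widetilde{\bu})$ to swap the cross term into $\alpha(\nabla \widetilde p, \widetilde{\bu})$ plus a $\Gamma_I$-trace, should produce a representation of the form
\begin{equation*}
a(\widetilde{\by}, \widetilde{\by}) = \|\widetilde{\bu}\|_E^2 + k\|\nabla \widetilde p\|_{0,b}^2 + 2\nu\|\mathbf D(\widetilde{\bv})\|_{0,f}^2 + \beta\|\widetilde{\bv}\cdot\btau\|_{\Gamma_I}^2 + R,
\end{equation*}
where $R$ collects the indefinite cross-terms $\alpha(\nabla\widetilde p,\widetilde{\bu})_{\Omega_b}$, $(\alpha+1)\langle\widetilde p,\widetilde{\bu}\cdot\mathbf e_3\rangle_{\Gamma_I}$, and $-\beta\langle\widetilde{\bv}\cdot\btau,\widetilde{\bu}\cdot\btau\rangle_{\Gamma_I}$ coming from the transmission coupling. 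Absorbing $|R|$ into the four dissipative diagonal terms via the Sobolev trace theorem, Korn's inequality in $\Omega_f$, Poincar\'e in $\Omega_b$ (using $\widetilde p|_{\Gamma_b}=0$ and $\widetilde{\bu}|_{\Gamma_b}=\mathbf 0$), and Young's inequality with an appropriately small parameter should yield coercivity of $a$ on $W$. Lax--Milgram then delivers a unique weak solution $(\widetilde{\bu},\widetilde p,\widetilde{\bv}) \in W$ with norm bounded by $\|\mathbf f\|_X$.

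To close the loop and verify membership in $\mathcal S$, I would promote the weak solution by interpreting each equation distributionally on the respective subdomain and applying elliptic regularity: this identifies $\mathcal E_0\widetilde{\bu}\in\mathbf L^2(\Omega_b)$, $A_0\widetilde p\in L^2(\Omega_b)$, and $\nu\Delta\widetilde{\bv}-\nabla\widetilde\pi\in\mathbf V$; in turn, the traces $\sigma^E(\widetilde{\bu})\mathbf n$, $\partial_{\mathbf n}\widetilde p$, $[\nabla\widetilde{\bv}]\mathbf n$, and $\widetilde\pi|_{\partial\Omega_f}$ can be assigned to the appropriate negative-index periodic trace spaces from Section \ref{opssp}. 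Each interfacial condition listed in Definition \ref{sdomain} is then recovered a posteriori by running the Green's identities from the proof of the previous lemma in reverse. Bounded invertibility $\mathcal L^{-1}:X\to X$ is the combination of the Lax--Milgram estimate with the elliptic subdomain bounds.

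The main obstacle is the coercivity argument in the second paragraph. The fluid--solid coupling on $\Gamma_I$ introduces genuinely indefinite cross-terms of $R$ that are not controlled by the individual elastic, pressure-diffusion, or Stokes energy blocks in isolation; what makes the absorption possible is the presence of a Poincar\'e-type inequality on each of $\mathbf U$, $H^1_{\#,*}(\Omega_b)$, and $\mathbf H^1_{\#,*}(\Omega_f)\cap\mathbf V$ coming from the non-interface Dirichlet faces $\Gamma_b$, $\Gamma_f$, together with the positivity of $k$, $\nu$, and $\beta$. This is precisely the delicate elliptic argument that underlies the semigroup generation proof for $\mathcal A$ in \cite{oldpaper}, transcribed here to the sign-flipped adjoint system; the details are mechanically different but structurally parallel.
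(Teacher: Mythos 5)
Your approach differs from the paper's in a structurally important way, and the difference is where the gap lies. You bundle $\widetilde{\bu}$, $\widetilde p$, and $\widetilde{\bv}$ into a single bilinear form on $W$ and aim for coercivity via Lax--Milgram, with the $\alpha$-coupling and trace terms absorbed by Young's inequality. But the $\alpha$-coupling in the adjoint resolvent equation is genuinely one-directional: the elastic equation $\mathcal E_0 \widetilde{\bu} + \alpha\rho_b^{-1}\nabla\widetilde p = \bw_2^*$ carries a $\nabla\widetilde p$ source, while the pressure equation (after eliminating $\widetilde{\bw} = -\mathbf f_1$) reads $c_0^{-1}k\Delta\widetilde p = q^* + \alpha c_0^{-1}\nabla\cdot \mathbf f_1$ and contains no $\widetilde{\bu}$ at all. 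There is therefore no skew-symmetric cancellation of $\alpha(\nabla\widetilde p, \widetilde{\bu})_{\Omega_b}$ when you test with the solution; it is a purely parasitic cross-term, and absorbing $\alpha\,\|\nabla\widetilde p\|\,\|\widetilde{\bu}\|$ into $k\|\nabla\widetilde p\|^2 + \|\widetilde{\bu}\|_E^2$ by Young and Poincar\'e forces a constraint of the type $\alpha^2 C_P C_K \lesssim k$. The paper imposes no such smallness condition on the physical parameters, so your coercivity claim does not hold as stated. This is not merely a technical delicacy that Sobolev trace estimates smooth over; it is a sign that the bilinear form on all of $W$ is simply not coercive in general.

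The paper avoids the problem by exploiting the (block upper-triangular) structure you've just noted: it first poses a mixed-variational problem for $(\widetilde p, \widetilde{\bv}, \widetilde\pi)$ alone on $H^1_{\#,*}(\Omega_b)\times\mathbf H^1_{\#,*}(\Omega_f)\times L^2(\Omega_f)$, in which $\widetilde{\bu}$ never appears (the Dirichlet interface data on $\Gamma_I$ involve only $\widetilde p$, $\widetilde{\bv}$, $\widetilde\pi$, and the already-determined $\widetilde{\bw}$), and solves it by the Babu\v{s}ka--Brezzi theorem, with $\widetilde\pi$ retained as a Lagrange multiplier rather than quotiented out by restriction to $\mathbf V$. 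Only afterwards is $\widetilde{\bu}$ recovered, from the now-closed Lam\'e boundary value problem $-\nabla\cdot\sigma^E(\widetilde{\bu}) = -\alpha\nabla\widetilde p + \rho_b\bw_2^* \in \mathbf L^2(\Omega_b)$ with the stress boundary condition $\sigma^E(\widetilde{\bu})\mathbf e_3 = -\sigma_f(\widetilde{\bv},\widetilde\pi)\mathbf e_3 + \alpha\widetilde p\,\mathbf e_3$ on $\Gamma_I$. To repair your argument, you would have to switch to this sequential decoupling: solve the pressure/Stokes pair first, then the Lam\'e problem. The subsequent elliptic regularity and trace bookkeeping in your last paragraph is sound and matches the paper's proof.
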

\begin{proof}
We will show that $\mathcal L : \mathcal S \subset X \to X$ given by \eqref{ell} is boundedly invertible. Let $\by^* = [\mathbf w_1^*, \mathbf w_2^*, q^*, \mathbf f^*]^T \in X$ and consider the equation
$$\mathcal L \by = \by^*.$$ 
We must find $\by = [\bu, \bw, p, \bv]^T \in \mathcal D(\mathcal L)$ so that

\begin{equation} \label{lmess} \begin{cases}
-\bw = \bw_1^* \\
-\rho_b^{-1}\text{div}~\sigma^E(\bu) + \alpha \rho_b^{-1}\nabla p = \bw^*_2 \\
-\alpha c_0^{-1}\nabla \cdot \bw_1^*+c_0^{-1}k\Delta p = q^* \\ 
\rho_f^{-1}\nu\Delta \bv-\rho_f^{-1}\nabla \pi = \mathbf f^*,
\end{cases}\end{equation}
including the other defining characteristics for $\by \in \mathcal D(\mathcal L)$. As before, $\pi$ here, given explicitly by \eqref{pi*}, will be the associated Stokes pressure for the variable $\by$. 

Now, consider test functions $[\widetilde p, \widetilde{\bv}]^T \in H^1_{\#,*}(\Omega_b) \times \mathbf H^1_{\#,*}(\Omega_f)$ for the last two equations in \eqref{lmess}. Testing brings about
\begin{align*}
-k(\nabla p, \nabla \widetilde p)_{\Omega_b} - \langle [\bv+\bw_1^*]\cdot \mathbf e_3, \widetilde p\rangle_{\Gamma_I}=  \alpha (\nabla \cdot \bw_1^*, \widetilde p)_{\Omega_b}& ~ +c_0(q^*, \widetilde p)_{\Omega_b}; \\
-\nu(\nabla \bv, \nabla \widetilde{\bv})_{\Omega_f} + \langle [\sigma_f(\bv,\pi)\mathbf e_3]\cdot \mathbf e_3, \widetilde{\bv}\cdot \mathbf e_3\rangle_{\Gamma_I}+\langle [\sigma_f(\bv,\pi)\mathbf e_3]\cdot \btau, \widetilde{\bv}\cdot \btau\rangle_{\Gamma_I}+(\pi,\nabla \cdot \widetilde{\bv})_{\Omega_f}= & ~ \rho_f(\mathbf f^*, \widetilde{\bv})_{\Omega_f}.
\end{align*}

Applying the boundary conditions on $\mathcal D(\mathcal L)$ and rewriting, we have
\begin{equation} \label{mixedvar}  \begin{cases}
-k(\nabla p, \nabla \widetilde p)_{\Omega_b} - \langle \bv\cdot \mathbf e_3, \widetilde p\rangle_{\Gamma_I}=\langle \bw_1^*\cdot \mathbf e_3,\widetilde p\rangle_{\Gamma_I}+  \alpha (\nabla \cdot \bw_1^*, \widetilde p)_{\Omega_b}  +c_0(q^*, \widetilde p)_{\Omega_b}; \\
-\nu(\nabla \bv, \nabla \widetilde{\bv})_{\Omega_f} + \langle p , \widetilde{\bv}\cdot \mathbf e_3\rangle_{\Gamma_I}-\beta \langle \mathbf v\cdot \btau , \widetilde{\bv}\cdot \btau\rangle_{\Gamma_I}+(\pi,\nabla \cdot \widetilde{\bv})_{\Omega_f}=  ~\beta \langle \bw_1^*\cdot \btau, \widetilde{\bv}\cdot \btau\rangle_{\Gamma_I}+  \rho_f(\mathbf f^*, \widetilde{\bv})_{\Omega_f}.
\end{cases} \end{equation}
The above equations \eqref{mixedvar} constitute the following \textbf{mixed-variational problem}:
\begin{quote}
Find $\big\{[p,\bv], \pi\big\} \in [H^1_{\#,*}(\Omega_b) \times \mathbf H^1_{\#,*}(\Omega_f)] \times L^2(\Omega_f)$ such that,
$$k(\nabla p, \nabla \widetilde p)_{\Omega_b} + \langle \bv\cdot \mathbf e_3, \widetilde p\rangle_{\Gamma_I}=-\langle \bw_1^*\cdot \mathbf e_3,\widetilde p\rangle_{\Gamma_I}-  \alpha (\nabla \cdot \bw_1^*, \widetilde p)_{\Omega_b} -c_0(q^*, \widetilde p)_{\Omega_b}$$
holds for all $\widetilde p \in H^1_{\#,*}(\Omega_b)$;
$$\nu(\nabla \bv, \nabla \widetilde{\bv})_{\Omega_f} - \langle p , \widetilde{\bv}\cdot \mathbf e_3\rangle_{\Gamma_I}+\beta \langle \mathbf v\cdot \btau , \widetilde{\bv}\cdot \btau\rangle_{\Gamma_I}-(\pi,\nabla \cdot \widetilde{\bv})_{\Omega_f}=  -\beta \langle \bw_1^*\cdot \btau, \widetilde{\bv}\cdot \btau\rangle_{\Gamma_I}-  \rho_f(\mathbf f^*, \widetilde{\bv})_{\Omega_f}$$
holds for all $\widetilde{\bv} \in \mathbf H^1_{\#,*}(\Omega_f)$; 
$$(\widetilde q, \nabla \cdot \bv)_{\Omega_f} = 0,$$
holds for all $\widetilde q \in L^2(\Omega_f)$. \end{quote}
This system is entirely analogous to the maximality part of the proof in \cite{oldpaper}, namely \cite[Sections 4.2.1--4.2.2]{oldpaper}. Thus, by the Babu\v{s}ka-Brezzi Theorem, we obtain a solution $\big\{[p,\bv], \pi\big\} \in [H^1_{\#,*}(\Omega_b) \times \mathbf H^1_{\#,*}(\Omega_f)] \times L^2(\Omega_f)$. With that solution in hand, we may specify test function $\widetilde p \in \mathcal{D}(\Omega _{b})$ above to obtain (via integration by parts and identifying $-\bw = \bw_1^*$) that
\begin{equation} \label{interior1} k\Delta p +\alpha \nabla \cdot \bw = c_0 q^*~\text{ in }~\Omega_b.\end{equation}
With $\Delta p =c_0q^*-\alpha \nabla \cdot \bw_1^* \in L^2(\Omega_b)$ and $p \in H^1_{\#,*}(\Omega_b)$, we further infer from the generalized Green's theorem
\begin{equation}\label{estt1}
||k\nabla p \cdot \mathbf n||_{H^{-1/2}(\partial\Omega_b)} \le C ||\by^*||_X.
\end{equation}
We proceed similarly, taking $\widetilde{\bv} \in [\mathcal{D}(\Omega _{f})]^3$ in the said mixed variational problem associated to \eqref{mixedvar}, obtaining
\begin{equation}\label{interior2} - \nu\Delta \bv - \nabla \pi = \rho_f\mathbf f^* \in \mathbf{L}^2(\Omega_f).\end{equation}
With $\mathbf f^* \in \mathbf V$ and $\bv \in \mathbf H^1_{\#,*}(\Omega_f)$, we obtain the estimate (proceeding as in \cite[Section 4.2.2]{oldpaper} for the boundary traces)
\begin{align}\label{estt2}
||\nu[\nabla \bv] \mathbf n||_{\mathbf H^{-1/2}(\partial\Omega_f)}+||\pi||_{H^{-1/2}(\partial \Omega_f)}+||\partial_{\mathbf n}\pi||_{H^{-3/2}(\partial \Omega_f)}+||\nabla \bv||_{[L^2(\Omega_f)]^{3\times 3}}+||\pi||_{L^2(\Omega_f)} \le C||\by^*||_X
\end{align}

\smallskip

Since we have $\mathbf f^* \in \mathbf V$, we know $\mathbf f^*\cdot \mathbf n\big|_{\partial \Omega_f} \in \mathbf H^{-1/2}(\partial \Omega_f)$ (with associated estimate). Consequently, we can read off from \eqref{interior2} (using \eqref{estt2}),
\begin{equation}\label{estt3}
||\nu \Delta \bv \cdot \mathbf e_3||_{H^{-3/2}(\Gamma_I)} \le ||\nabla \pi \cdot \mathbf n||_{H^{-3/2}(\partial \Omega_f)}+\rho_f ||\mathbf f^*\cdot \mathbf n||_{H^{-1/2}(\partial \Omega_f)} \le C||\by^*||_{X}.
\end{equation}
Moreover, directly repeating the argument in \cite[Section 4.2.2]{oldpaper}, we obtain the lateral trace periodicities,
\begin{align}\label{pisbound}
& \pi\big|_{\partial \Omega_f} \in H^{-1/2}_{\#}(\partial \Omega_f),~~[\nabla \bv]\mathbf n \big|_{\partial \Omega_f} \in \mathbf H^{-1/2}_{\#}(\partial \Omega_f),\\ \nn  &~~\nabla p \cdot \mathbf n \in H^{-1/2}_{\#}(\partial \Omega_b). 
\end{align}
With these justified (generalized) boundary traces in hand, we return to the mixed-variational problem associated to \eqref{mixedvar}. We invoke Green's theorem again in second order and divergence terms to obtain
\begin{equation} \label{mixedvar*}  \begin{cases}
k(\Delta p,  \widetilde p)_{\Omega_b}-k\langle \nabla p\cdot \mathbf n, \widetilde p\rangle_{\Gamma_I} - \langle \bv\cdot \mathbf e_3, \widetilde p\rangle_{\Gamma_I}&=  \langle \bw_1^*\cdot \mathbf e_3,\widetilde p\rangle_{\Gamma_I}-  \alpha (\nabla \cdot \bw, \widetilde p)_{\Omega_b}  +c_0(q^*, \widetilde p)_{\Omega_b} \\
\nu(\Delta \bv,  \widetilde{\bv})_{\Omega_f} + \nu\langle [\nabla \bv]\mathbf n,\widetilde{\bv}\rangle_{\Gamma_I} + \langle p , \widetilde{\bv}\cdot \mathbf e_3\rangle_{\Gamma_I} & -\beta \langle \mathbf v\cdot \btau , \widetilde{\bv}\cdot \btau\rangle_{\Gamma_I} -(\nabla \pi,\widetilde{\bv})_{\Omega_f} -\langle \pi\mathbf n, \widetilde{\bv}\rangle_{\Gamma_I} \\
& =    \beta \langle \bw_1^*\cdot \btau, \widetilde{\bv}\cdot \btau\rangle_{\Gamma_I}+  \rho_f(\mathbf f^*, \widetilde{\bv})_{\Omega_f},
\end{cases} \end{equation}
for all $\widetilde p \in H^1_{\#,*}(\Omega_b),~\widetilde{\bv} \in \mathbf H^1_{\#,*}(\Omega_f)$. 
With the pointwise relations \eqref{interior1} and \eqref{interior2}, we cancel interior terms to obtain
\begin{equation} \label{mixedvarboundary}  \begin{cases}
-k\langle \nabla p\cdot \mathbf n, \widetilde p\rangle_{\Gamma_I} - \langle \bv\cdot \mathbf e_3, \widetilde p\rangle_{\Gamma_I}=  \langle \bw_1^*\cdot \mathbf e_3,\widetilde p\rangle_{\Gamma_I}   \\
 \nu\langle [\nabla \bv]\mathbf n,\widetilde{\bv}\rangle_{\Gamma_I} + \langle p , \widetilde{\bv}\cdot \mathbf e_3\rangle_{\Gamma_I}  -\beta \langle \mathbf v\cdot \btau , \widetilde{\bv}\cdot \btau\rangle_{\Gamma_I}  -\langle \pi\mathbf n, \widetilde{\bv}\rangle_{\Gamma_I}
 =    \beta \langle \bw_1^*\cdot \btau, \widetilde{\bv}\cdot \btau\rangle_{\Gamma_I},
\end{cases} \end{equation}
for all $\widetilde p \in H^1_{\#,*}(\Omega_b),~\widetilde{\bv} \in \mathbf H^1_{\#,*}(\Omega_f)$.  Rewriting, we obtain
\begin{align} \label{var1bd}
k\langle \nabla p\cdot \mathbf e_3 ,\widetilde p\rangle_{\Gamma_I} =& ~ \langle [\bv - \bw]\cdot \mathbf e_3, \widetilde p\rangle_{\Gamma_I}, \\  \label{var2bd}
-\langle \sigma_f(\bv,\pi)\mathbf e_3, \widetilde{\bv}\rangle_{\Gamma_I} =&~ -\langle p, \widetilde{\bv}\cdot \mathbf e_3\rangle_{\Gamma_I} + \beta\langle[\bv-\bw]\cdot \btau, \widetilde{\bv}\cdot\btau\rangle_{\Gamma_I}, 
\end{align}
for all $\widetilde p \in H^1_{\#,*}(\Omega_b),~\widetilde{\bv} \in \mathbf H^1_{\#,*}(\Omega_f)$. 
To recover the boundary conditions from the variational relations, we proceed as in \cite{oldpaper}. Namely, let $g \in H_0^{1/2+\epsilon}(\Gamma_I)$ and extend by zero ot the rest of $\partial\Omega_b$, invoking the surjectivity of the trace theorem to obtain $\widetilde p \in H^1(\Omega_b)$, so that 
$$\widetilde p \big|_{\partial \Omega_b} = \begin{cases} g & \text{ on }~\Gamma_I \\ 0 & \text{ on }~\partial\Omega_b\setminus \overline{\Gamma_I} .\end{cases}$$
Testing \eqref{var1bd} with this $\widetilde p$ yields
$$k\langle \nabla p\cdot \mathbf e_3 , g\rangle_{\Gamma_I} = \langle [\bv-\bw]\cdot \mathbf e_3, \widetilde p\rangle_{\Gamma_I},$$
and the standard density argument gives the desired
$$k\nabla p\cdot \mathbf e_3 = [\bv-\bw]\cdot \mathbf e_3 \in H^{-1/2}(\Gamma_I).$$
Correspondingly, we may produce $\widetilde{\bv} \in \mathbf H^1(\Omega_f)$ such that
$$\widetilde{\bv}\big|_{\partial\Omega_f} = \begin{cases} g \mathbf e_3 & \text{ on }~\Gamma_I \\ 0 & \text{ on }~\partial\Omega_f\setminus\overline{\Gamma_I},\end{cases}$$
and we  consider $g \in H_0^{1/2+\epsilon}(\Gamma_I)$. Applying this particular $\widetilde{\bv}$ to (3.71) yields
$$-\langle [\sigma_f(\bv,\pi)\mathbf e_3]\cdot \mathbf e_3, g\rangle_{\Gamma_I} = -\langle p, g\rangle_{\Gamma_I},$$
and so by density argument, 
$$[\sigma_f(\bv,\pi)\mathbf e_3]\cdot \mathbf e_3 = p \in H^{-1/2}(\Gamma_I).$$
Analogously, we obtain
$$-[\sigma_f(\bv,\pi)\mathbf e_3]\cdot\btau = \beta[\bv-\bw]\cdot \btau \in H^{-1/2}(\Gamma_I).$$
With the boundary conditions in hand, we may lastly recover $\bu$ (since we have already recovered $\bw, p, \bv$). This is done directly: reading off of \eqref{lmess} provides
\begin{equation}
\begin{cases} 
-\text{div}~\sigma^E(\bu) = -\alpha \nabla p + \rho_b\bw_2^* \\ 
\sigma^E(\bu)\mathbf e_3 = - \sigma_f(\bv,\pi)\mathbf e_3+\alpha p \mathbf e_3\\
\bu = 0 ~\text{ on }~\Gamma_b \\ 
\bu ~\text{ is  periodic on }~ \Gamma_{\text{lat}}.
\end{cases}
\end{equation}
Solving this boundary value problem, we obtain the estimate,
\begin{equation}\label{estt4} ||\bu||_{\mathbf H^1_{\#,*}(\Omega_b)}+||\sigma^E(\bu) \mathbf{n}||_{\mathbf H_{\#}^{-1/2}(\partial \Omega_b)} \le C ||\by^*||_X.\end{equation}
(in particular, the periodicity of the flux is obtained in the same way as that for the Stokes and Biot pressure traces; see \cite[Section 4.2.2]{oldpaper}).
With $\by = [\bu,\bw,p,\bv]^T \in \mathcal D(\mathcal L) = \mathcal S$ we have also obtained  harmonic $\pi \in L^2(\Omega_f)$, with boundary values from \eqref{pisbound} satisfying \eqref{pis3}. Thus, taking into account estimates \eqref{estt1}, \eqref{estt2}, \eqref{estt3}, and \eqref{estt4}, we see that the system \eqref{lmess} is well-posed---with associated stability estimate
$$||\by||_X \le C||\by^*||_X.$$
We are then justified in writing
$$\by = \mathcal L^{-1}\begin{bmatrix} \bw_1^*\\ \bw_2^*\\ q^*\\ \mathbf f^*\end{bmatrix} \in \mathcal D(\mathcal L),$$
which completes the proof of Lemma \ref{resolvent}. 
\end{proof}

Now, we complete the proof of Proposition \ref{thistheadjoint} by showing that $\mathcal D(\mathcal A^*) \subseteq \mathcal S$. 
\begin{proof}
Let $\by = [\bu,\bw,p,\bv]^T \in \mathcal D(\mathcal A)$ be given. Also let $\mathbf x \in \mathcal D(\mathcal A^*)$ be given, and set 
$$\by^*= \begin{bmatrix} \bw_1^* \\ \bw_2^* \\q^* \\ \mathbf f^*\end{bmatrix} \equiv \mathcal A^* \mathbf x $$ then write
$$\byt = \begin{bmatrix} \widetilde{\bu} \\ \widetilde{\bw} \\  \widetilde p \\  \widetilde{\bv} \end{bmatrix} \equiv \mathcal L^{-1}\by^*,$$
so $$\byt = \mathcal L^{-1}\mathcal A^*\mathbf x.$$

It is direct to verify that for given $\mathbf x \in \mathcal D(\mathcal A^*)$, with $\widetilde{\by} = \mathscr L^{-1}\mathcal A^*\mathbf x$,
\begin{align*}
(\mathcal A\by , \widetilde \by)_X = & ~ (\sigma^E(\bu), \mathbf D(\mathbf w_1^*))_{\Omega_b} + \rho_b(\bw, \bw^*_2)_{\Omega_b}+ c_0(p, q^*)_{\Omega_b} + \rho(\bv, \mathbf f^*)_{\Omega_f} \\
=& (\by,\mathcal A^*\mathbf x)_X,
\end{align*}
for all $\by \in \mathcal D(\mathcal A)$. We omit these calculations, as they closely mirror those in the preceding lemmata. 

In other words, 
$$(\mathcal A \by, \mathbf x-\mathcal L^{-1}\mathcal A^*\mathbf x)_X = 0,~~\forall~\by \in \mathcal D(\mathcal A).$$
The same proof as in Lemma \ref{resolvent} yields that $\mathcal A$ is boundedly invertible on $X$, which then, ranging through $X = \mathcal A\mathcal D(\mathcal A)$, yields that 
$$\mathbf x = \mathscr L^{-1}\mathcal A^* \mathbf x.$$ By construction, $\mathscr L^{-1}\mathcal A^* \mathbf x \in \mathcal D(\mathcal L) = \mathcal S$. But $\mathbf x$ was assumed to be in $\mathcal D(\mathcal A^*)$, so we conclude that $\mathcal D(\mathcal A^*) \subseteq \mathcal S$, as desired. Thus, $\mathcal L = \mathcal A^*$. 
\end{proof}
We have shown that $\mathcal L = \mathcal A^*$, and hence characterized the action and domain of $\mathcal A^*$. This completes the proof of Proposition \ref{thistheadjoint}.

\scriptsize

\end{document}